\definecolor{gray}{rgb}{0.4,0.4,0.4}
\definecolor{white}{rgb}{1,1,1}
\definecolor{red}{rgb}{0.8,0,0}
\theoremstyle{definition}
\theoremstyle{remark}
\title{\emph{Teachers, Learners and Oracles}}
\author{Achilles A. Beros}
\address[Achilles A. Beros]{Department of Mathematics\\
University of Hawai`i at M{\=a}noa\\
Honolulu, HI 96822, USA}
\email{beros@math.hawaii.edu}
\author{Colin de la Higuera}
\address[Achilles A. Beros]{Laboratoire d'Informatique de Nantes Atlantique\\
	Universit\'e de Nantes\\
	2 rue de la Houssini\`{e}re BP 92208\\
	44322 Nantes Cedex 03\\
	FRANCE}
\email{cdlh@univ-nantes.fr}
\theoremstyle{plain}
\newtheorem{Theorem}{Theorem}[section]
\newtheorem{Proposition}[Theorem]{Proposition}
\newtheorem{Lemma}[Theorem]{Lemma}
\theoremstyle{definition}
\newtheorem{Definition}[Theorem]{Definition}
\newtheorem{Example}[Theorem]{Example}
\newcommand{\bbn}{\mathbb N}
\newcommand{\bbz}{\mathbb Z}
\newcommand{\content}[1]{\mbox{content}(#1)}
\newcommand{\desc}{\textsc{descriptors}}
\newcommand{\signed}{\textsc{signedInt}}
\newcommand{\upto} {\upharpoonright}
\newcommand{\seq}[2]{\{#1_{#2}\}_{#2 \in \bbn}}
\newcommand{\mi}[2]{\textsc{mi}_{#1}(#2)}
\begin{document}

\begin{abstract}
We exhibit a family of computably enumerable sets which can be learned within polynomial resource bounds given access only to a teacher, but which requires exponential resources to be learned given access only to a membership oracle.  In general, we compare the families that can be learned with and without teachers and oracles for four measures of efficient learning.
\end{abstract}

\maketitle

\section{Introduction}

In this paper, we address the question of whether or not the presence of a teacher as a computational aide improves learning.  A teacher is a computable machine that receives data and selects a subset of the data.  In the models we consider, a teacher receives an enumeration for a target and passes its data selection to the learner -- the learner does not have access to the original data.  The first natural question is if there are families that are learnable with a teacher, but not learnable without.  As will be obvious from the definitions presented in the next section, the answer is no: the learner can always perform an internal simulation of the learner-teacher interaction and output the result.  The second question is whether a teacher can improve efficiency.  For teacher models of learning, only the computational activity of the learner counts against the efficiency bound; the computational activity of the teacher is not counted.  Heuristically, the question is whether there is benefit to pre-processing data.  We will prove there can be an exponential improvement in efficiency.  In fact, there are situations where access to a teacher is better than access to a membership oracle about the target.

Various forms of and questions related to teaching have arisen in learning theory over the last few decades.  Work on the complexity of teaching families has given rise to the classical teaching dimension \cite{goldman-kearns} and more recently the recursive teaching dimension \cite{zilles-1, zilles-2}.  In \cite{zilles-2}, Zilles et al.~establish deep and interesting connections between recursive teaching dimension, Vapnik-Chervonenkis dimension and sample compression schemes (see \cite{floyd-warmuth} for more about sample compression).  Query learning has been a central topic in learning theory for even longer than teaching.  Numerous papers have been written both on the abilities of machines equipped with oracles to learn \cite{sammut-banerji, angluin-q1, angluin-q2} and on the properties of oracles that allow learning of certain target families \cite{stephan-noisy, jain-sharma, fortnow-etal, stephan-kummer}.

We add to the body of research on teaching and query learning by comparing the efficiency of the two learning modes.


\section{Background}

We will examine variants of Gold-style text learning of effectively describable sets of natural numbers.  In particular, the target objects will be \emph{computably enumerable sets}.

\begin{Definition}
A set, $S$, is \emph{computably enumerable (c.e.)} if there is a partial computable function, $f$, such that $S = \mathrm{dom}(f)$.  A sequence of sets, $\{A_n \}_{n \in \mathbb N}$ is called \emph{uniformly computably enumerable (u.c.e.)} if the set $\{ \langle a,i \rangle : a \in A_i \}$ is c.e.  We also call u.c.e.~sequences of sets \emph{indexed families} and call $n$ an index for $A_n$.  Note that in an indexed family a set may have multiple indices if the sequence $\{A_n \}_{n \in \mathbb N}$ has multiple instances of the same set.  For notational convenience, we regard indexed familes both as sequences and as sets and write $A \in \mathcal A$ meaning $(\exists n) (A = A_n)$.
\end{Definition}

We now remind the reader of some standard notation and concepts as well as introducing some notation specific to this paper.

\begin{enumerate}
\item $\phi$ denotes an acceptable universal Turing machine and hence, a partial computable function.  $\phi_{e,s}(x)$ is the state or value of the function described by the program coded by $e \in \mathbb N$ after $s$ computation stages on input $x$.  If the program execution has terminated, we write $\phi_{e,s}(x)\downarrow$, otherwise we write $\phi_{e,s}(x)\uparrow$.

\item $W_e$ is the c.e.~set coded by the program $e$ as the domain of $\phi_e$.  $\{ W_e \}_{e \in \mathbb N}$ is a u.c.e.~sequence of sets and enumerates all the c.e.~sets.  We write $\mathcal E$ for the set of all c.e.~sets.

\item For $n \in \bbn$, $\langle x_0,x_1,\ldots , x_n \rangle : \bbn^{n+1} \rightarrow \bbn$ is a polynomial-time computable encoding function such that $x_i \leq \langle x_0,x_1,\ldots , x_n \rangle$ for all $i \leq n$.  We also define a polynomial-time computable decoding function $(x)_n: \bbn \rightarrow \bbn^{n}$ which is the inverse function of encoding function $\langle x_0,x_1,\ldots , x_{n-1} \rangle$.  We define $A \otimes B = \{ \langle a,b \rangle : (a \in A) \wedge (b \in B) \}$.  We use $\otimes$ to partition $\bbn$ into an infinite number of infinite computable sets, $\bbn \otimes \{0\}, \bbn \otimes \{1\}, \ldots$.  Sets of this form are known as \emph{columns}, whereby $\bbn \otimes \{i \}$ is the $i^{th}$ column of $\bbn$.  As a shorthand, we will represent the $i^{th}$-column of $\bbn$ with the symbol $C_i$ and the $i^{th}$-column of $A \subseteq \bbn$ by $C_i(A)$.  Associated with $C_i$, we define $c_i$ to be a computable function such that $W_{c_i(x)} = W_x \cap C_i$.

\item We write $(x_0,x_1, \ldots , x_n)$ to denote the ordered tuple of elements (as opposed to the encoding of the ordered tuple, $\langle x_0,x_1,\ldots , x_n \rangle$).

\item We fix an encoding of polynomials as natural numbers and write $p^*$ to denote the encoding of a polynomial $p$.  The encoding is polynomial-time computable, as is the decoding, and maps onto $\bbn$.

\item $\signed : \bbn \rightarrow \bbz$ is the computable bijection such that $\signed(2n) = n$ and $\signed(2n+1) = -(n+1)$.

\item If $a$ is a string or natural number, then $a^i$ denotes the string which consists of $a$ repeated $i$ times.

\item For function composition we use the notation $f \circ g$ where $(f \circ g)(x) = f(g(x))$.

\item If $\sigma = a_0 \cdots a_n$ is a string, then $|\sigma| = n+1$ is the length of the string, $\sigma(k) = a_k$ and $\content{\sigma} = \{ \sigma(k) : k < |\sigma| \}$.

\item An enumeration of a non-empty set $A$ is an infinite sequence of elements of $A$ such that every element of $A$ appears in the sequence at least once.  We regard an enumeration as a stream of bits with markers between individual elements.  We will restrict our attention to non-empty sets.  Consequently, we need not consider enumerations of the empty set.

\item A learning machine (or learner) is a partial computable function that receives a string as input, may have access to oracle queries and outputs a natural number that is interpreted as a code for a set.  The outputs are called hypotheses and the sequence of hypotheses produced by a learner on initial segments of an enumeration is called the hypothesis stream.  When measuring efficiency, we allow a learner to skip an element of an enumeration for some fixed computational cost.

\item Given an interval $[0, n]$, where $n$ is unknown, but bounded by $a^m$, $n$ can be determined with $(m+1)^{a+1}$ or fewer oracle queries using the following algorithm.  First, determine the least $k_0$ such that $a^{k_0+1} \not\in [0, n]$.  We will obtain $k_0$ after at most $m+1$ queries.  Next, we repeat the process to determine the least $k_1$ such that $a^{k_0} + a^{k_1+1} \not\in [a^{k_0}, n]$.  By iterating this process at most $a+1$ times we find $n$.  We call this an exponential query search algorithm.
\end{enumerate}

%
%
%
%

We will consider learning models using combinations of three different data sources: enumeration, oracle and teacher.  All of the models we consider are forms of TxtEx-learning, or learning in the limit.  We begin with the definition of this fundamental learning model.

\begin{Definition}
Let $M$ be a computable learning machine, $\mathcal F = \{F_n \}_{n \in \mathbb N}$ an indexed family and $\{ a_n \}_{n \in \mathbb N}$ an enumeration (text) of a set $F \in \mathcal F$.
\begin{enumerate}
\item $M$ TxtEx-identifies $\{ a_n \}_{n \in \mathbb N}$ if 
\[
(\exists i)(\forall j) \big( M(a_0 \ldots a_{i+j}) = M(a_0 \ldots a_i) \wedge F_{M(a_0 \ldots a_i)} = F \big)
\]
If only the first condition above is met, i.e., $(\exists i)(\forall j)(M(a_0...a_{j+1}) = M(a_0...a_i))$, then we say that $M$ has \emph{converged} on the enumeration $\{a_n\}_{n \in \mathbb N}$.

\item $M$ TxtEx-learns $F$ if $M$ TxtEx-identifies every enumeration of $F$.

\item $M$ TxtEx-learns $\mathcal F$ if $M$ TxtEx-learns every $F \in \mathcal F$.
\end{enumerate}
\end{Definition}

All of the models we examine in this paper are variants of TxtEx-learning.  The parameters we will vary are linked to sources of information and the measurement of efficiency.  We state definitions of these variants starting from an arbitrary learning model.

\begin{Definition}\label{def-teacher-oracle}
Let L-learning be an arbitrary learning model.  
\begin{enumerate}
\item We say that $\mathcal F$ is \emph{L-learnable with a membership oracle} (denoted L[O]-learnable) if there is a learning machine, $M$, that L-learns $\mathcal F$ and has access to a membership oracle for the target it is learning.  As membership oracles are the only oracles we will consider, we often simply refer to a membership oracle as an oracle.

\item A function $T: 2^{<\bbn} \rightarrow 2^{<\bbn}$ is a \emph{teacher} if it is a computable function, $T(\sigma)$ is a prefix of $T(\tau)$ whenever $\sigma$ is a prefix of $\tau$, and $\content{T(\sigma)} \subseteq \content{\sigma}$.  We say that $\mathcal F$ is \emph{L-learnable with a teacher} (denoted L[T]-learnable) if there is a learner-teacher pair $(M,T)$ such that $M$ L-identifies every enumeration of the form $T \circ f$, where $f$ enumerates a member of $\mathcal F$.  

\item We say that $\mathcal F$ is \emph{L-learnable with a teacher and a membership oracle} (denoted L[T,O]-learnable) if there is a learner-teacher pair, $(M,T)$, such that $M$ has access to a membership oracle, $T$ has access to the query responses $M$ receives, and $M$ L-identifies every enumeration of the form $T \circ f$, where $f$ enumerates a member of $\mathcal F$.

%
\end{enumerate}
\end{Definition}

As is clear from the definition, the teacher serves to pre-process the text input before passing the elements deemed important to the learner.  In the subsequent sections, we will consider the different combinations of teacher and oracle with certain variants of TxtEx-learning.

When defining efficiency notions for learning, the first natural notion is that of polynomial run-time: the learner must converge within $p(e)$ computation steps, where $p$ is a polynomial and $e$ is a code for the target.  There are two problems with this definition.  First, apart from trivial cases, any learning process can be delayed arbitrarily by using an enumeration that repeats a single element of the target set.  Second, if a learning machine has produced an encoding of the target, but has failed to do so in polynomial run-time, a suitably larger and equivalent encoding can be chosen instead so that the run-time is appropriately bounded.  As we are considering indexed families, rather than general classes of c.e.~sets, we can address the second problem by fixing a reference index for every set in the family against which efficiency is measured.

\begin{Definition}
Let $\mathcal A = \{ A_n \}_{n \in \mathbb N}$ be an indexed family.  We define the \emph{minimal index of $A \in \mathcal A$} (symbolically, $\mi{\mathcal A}{A}$ to be the least $n$ such that $A_n = A$.
\end{Definition}

By restricting our attention to indexed families, we have a well-defined concept of polynomial bounds in the size of the target that is independent of the underlying numbering of the c.e.~sets, thereby addressing the second problem.  In the absence of an oracle or teacher the first problem remains.  Nevertheless, we include polynomial run-time among the notions of efficiency that we define below as it is reasonable when an oracle or teacher is present. 

We will address four measures of learning efficiency: Polynomial run-time, polynomial size dataset, polynomial size characteristic sample, and polynomial mind-changes.

We have also proved~\cite{beros-teachers-arxiv} the results presented in this paper for general classes of c.e.~sets equipped with an indexing function.  Nevertheless, in this paper we restrict our attention to the limited case of indexed families as it is a more familiar context than the indexed target families required by the general case.  In that more general case, the indexing function selects a unique code from the underlying numbering for each set in the class.  The codes output by the indexing function are taken as the reference against which efficiency is computed.

\section{Polynomial Run-Time}\label{prt-sec}

\begin{Definition}\label{prt-def}
An indexed family $\mathcal F = \seq{F}{n}$ is \emph{polynomial run-time learnable} (\emph{PRT-learnable}) if there is a machine $M$ and a polynomial $p$ such that for every enumeration $f$ of $F \in \mathcal F$, the learner $M$ converges to a correct index on $f$ in fewer than $p(\mi{\mathcal F}{F})$ computation steps.  If an oracle is accessed, oracle use must also be bounded by $p(\mi{\mathcal F}{F})$.  We use to PRT to denote the set of all PRT-learnable indexed families.
\end{Definition}

We will apply Definition \ref{def-teacher-oracle} to Definition \ref{prt-def} to obtain, for example, PRT[T]-learning and PRT[T], the PRT[T]-learnable indexed families.

Proposition \ref{prt-sans-to} demonstrates that PRT-learnability is much too restrictive in the absence of an oracle or teacher. 

\begin{Proposition}\label{prt-sans-to}
Let $\mathcal F$ be an indexed family.  If there are $A, B \in \mathcal F$ such that $A \neq B$ and $A \cap B \neq \emptyset$, then $\mathcal F$ is not PRT-learnable.
\end{Proposition}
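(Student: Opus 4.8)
The plan is to use the ``stalling'' phenomenon the text already flags: an enumeration that repeats a single element can delay a learner arbitrarily. Since $A\cap B\neq\emptyset$, fix $c\in A\cap B$. The point is that $c$, repeated long enough, is a legitimate opening segment of enumerations of \emph{both} $A$ and $B$, so a learner forced to act within a polynomial number of steps cannot read far enough past that segment to see an element distinguishing the two sets, and, having no oracle or teacher to consult, must behave identically on the two enumerations.

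In detail, I would argue by contradiction. Suppose $M$ and a polynomial $p$ witness that $\mathcal F$ is PRT-learnable, and write $a=\mi{\mathcal F}{A}$ and $b=\mi{\mathcal F}{B}$. Pick enumerations $g_A$ of $A$ and $g_B$ of $B$, choose a natural number $N$ exceeding both $p(a)$ and $p(b)$ (say $N=p(a)+p(b)+1$), and set $f_A=c^N\concat g_A$ and $f_B=c^N\concat g_B$; since $c\in A$ and $c\in B$, these are genuine enumerations of $A$ and of $B$ respectively. Because accessing or skipping each successive element of an enumeration costs at least one computation step, within fewer than $p(a)$ steps $M$ has consumed fewer than $p(a)$, hence fewer than $N$, input elements, all of them copies of $c$; likewise with $b$ in place of $a$. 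Since $M$ has no oracle, its computation on such an initial stretch is a function of its input alone, so $M$ behaves identically on $f_A$ and $f_B$ throughout the entire common segment $c^N$.

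Now invoke the PRT bound on each enumeration. On $f_A$, in fewer than $p(a)$ steps $M$ reaches a stable hypothesis $h$ with $F_h=A$, having by then read only a prefix $c^j$ with $j<N$; by the definition of convergence $M(c^\ell)=h$ for every $\ell\geq j$ with $c^\ell$ a prefix of $f_A$, in particular $M(c^N)=h$. Symmetrically, on $f_B$ we get $M(c^N)=h'$ for some $h'$ with $F_{h'}=B$. But $M$, being a deterministic oracle-free function of its input, assigns a single value to $c^N$, so $h=h'$ and hence $A=F_h=F_{h'}=B$, contradicting $A\neq B$.

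I expect the one delicate point to be the quantitative claim that fewer than $p(a)$ computation steps let $M$ read strictly fewer than $N$ input elements: this is exactly where one uses that skipping an element has a fixed cost of at least one step and that $N$ was chosen larger than both polynomial bounds; everything after that is bookkeeping with determinism and the definition of convergence. Both hypotheses of the proposition are genuinely used --- $A\cap B\neq\emptyset$ supplies the shared symbol $c$ out of which the common segment is built, and the restriction to plain PRT (no oracle, no teacher) is what prevents $M$, or a teacher acting on its behalf, from breaking the ambiguity before the polynomial budget is exhausted; with a teacher, for instance, the padding $c^N$ could simply be collapsed to the single element $c$.
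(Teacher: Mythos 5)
Your proposal is correct and follows essentially the same route as the paper's proof: both exploit the shared element $c\in A\cap B$ to build a common initial segment $c^{N}$ of length exceeding both polynomial bounds, then observe that the deterministic, oracle-free learner must converge on that segment to a single hypothesis that would have to code both $A$ and $B$. Your write-up is somewhat more explicit about the step-counting and the final contradiction, but the underlying argument is identical.
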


\begin{proof}
Let $A$, $B$ and $\mathcal F$ be as in the statement, let $M$ be an arbitrary learning machine and $p$ an arbitrary increasing polynomial.  Also, let $a=\mi{\mathcal F}{A}$, $b=\mi{\mathcal F}{B}$ and let $x \in A \cap B$.  Define $f_A$ to be an enumeration of $A$ that begins with $x^{p(a) + p(b)}$ and $f_B$ be an enumeration of $B$ that begins with $x^{p(a) + p(b)}$.  If $M$ PRT-identifies $f_A$, then $M(\sigma)$ must be a code for $A$ for any $\sigma = x^{p(a)+i}$ for $i \geq 0$.  Similarly, if $M$ PRT-identifies $f_B$, then $M(\sigma)$ must be a code for $B$ for any $\sigma = x^{p(b)+i}$ for $i \geq 0$.  Thus, no machine can PRT identify both $f_A$ and $f_B$ and $\mathcal F$ is not PRT-learnable.

\end{proof}

On the other hand, there are many non-trivial indexed families which are PRT[O]-, PRT[T]- or PRT[T,O]-learnable.

\begin{Example}\label{prt-o-ex}
Define $F_n = [n, \infty)$, $G_{\langle m,n \rangle} = [m, n]$ and $H^k_n = \content{(n)_k}$.  The indexed families $\mathcal F = \seq{F}{n}$, $\mathcal G = \seq{G}{n}$ and $\mathcal H_k = \seq{H^k}{n}$ for $k\in \bbn$ are PRT[O]-learnable.
\end{Example}

\begin{Example}\label{prt-t-ex}
The indexed families in Example \ref{prt-o-ex} are also PRT[T]-learnable.  For example, consider $\mathcal H_k$ for some fixed $k$.  Define a teacher $T$ such that $T(a_0 \cdots a_n) = T(a_0 \cdots a_{n-1}) a_n$ if $a_n \not\in \{a_0, \ldots, a_{n-1}\}$ and outputs $T(a_0 \cdots a_{n-1})$ otherwise.  Define a learner $M$ that waits until it has received $k+1$ distinct numbers, $\{b_0, \ldots b_k\}$, from $T$ and then outputs $\langle b_0, \ldots b_k \rangle$.  $(M, T)$ PRT[T]-learns $\mathcal H_k$.
\end{Example}

\begin{Proposition}\label{prt-t-o}
$PRT \subset PRT[O] \subseteq PRT[T,O]$ and $PRT \subset PRT[T] \subseteq PRT[T,O]$.
\end{Proposition}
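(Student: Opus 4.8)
The plan is to establish the four assertions separately, exploiting the fact that a teacher or an oracle can only help, and that a strict separation comes for free from Proposition \ref{prt-sans-to}. For the inclusions $PRT \subseteq PRT[O]$ and $PRT \subseteq PRT[T]$, observe that any learner witnessing PRT-learnability can simply ignore the oracle (in the first case) or be paired with the identity teacher $T(\sigma)=\sigma$ (in the second case), which is a legitimate teacher since $\content{T(\sigma)} = \content{\sigma}$ and $T$ is monotone; the run-time bound is unchanged. Similarly $PRT[O] \subseteq PRT[T,O]$ is witnessed by the same learner together with the identity teacher, and $PRT[T] \subseteq PRT[T,O]$ is witnessed by the same learner-teacher pair, now with the learner declining to query the oracle. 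In each case the polynomial bound $p(\mi{\mathcal F}{F})$ transfers verbatim.

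For the strict separations $PRT \subsetneq PRT[O]$ and $PRT \subsetneq PRT[T]$, I would invoke Example \ref{prt-o-ex} and Example \ref{prt-t-ex}: the family $\mathcal G = \{G_{\langle m,n\rangle}\}$ with $G_{\langle m,n\rangle}=[m,n]$ contains, for instance, $[0,1]$ and $[0,2]$, which are distinct sets with nonempty intersection, so by Proposition \ref{prt-sans-to} $\mathcal G \notin PRT$; yet $\mathcal G \in PRT[O]$ and $\mathcal G \in PRT[T]$ by the two Examples. (Alternatively one may use $\mathcal F = \{[n,\infty)\}$, since $[0,\infty) \cap [1,\infty) = [1,\infty) \neq \emptyset$.) This simultaneously separates $PRT$ from both $PRT[O]$ and $PRT[T]$, and hence a fortiori from $PRT[T,O]$.

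I do not anticipate a genuine obstacle here; the only point requiring a moment's care is checking that the constant-function or identity-function teacher satisfies all three clauses of the teacher definition in Definition \ref{def-teacher-oracle}, and that in the $[T,O]$ model the learner's act of not querying the oracle is consistent with the teacher having access to (an empty list of) query responses. One should also note that the inclusions $PRT[O] \subseteq PRT[T,O]$ and $PRT[T] \subseteq PRT[T,O]$ are stated as non-strict because separating them would require more work (indeed, the main theorem of the paper addresses exactly the question of whether teachers can outperform oracles), so no such separation is claimed here.
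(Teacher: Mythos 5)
Your proposal is correct and follows essentially the same route as the paper: the non-strict inclusions are immediate from the definitions (the paper does not even bother with the identity-teacher detail you spell out), and the strict separation $PRT \subset PRT[O] \cap PRT[T]$ is obtained by applying Proposition \ref{prt-sans-to} to one of the families of Examples \ref{prt-o-ex} and \ref{prt-t-ex}. The only difference is the choice of witness --- the paper uses $\mathcal H_2$ where you use $\mathcal G$ (or $\mathcal F$) --- which is immaterial.
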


\begin{proof}
Let $\mathcal H_2$ be as above.  As observed in Examples \ref{prt-o-ex} and \ref{prt-t-ex}, $\mathcal H_2$ is PRT[O]-learnable and PRT[T]-learnable, but by Proposition \ref{prt-sans-to}, $\mathcal H_2$ is not PRT-learnable.  Thus, $PRT \subset PRT[O] \cap PRT[T]$.  The other containments follow from the definitions.

\end{proof}

We now produce indexed families that distinguish PRT[T]-learning from PRT[O]-learning and PRT[T,O]-learning from both PRT[O]- and PRT[T]-learning.  In order to prove that all of these distinctions are non-trivial, we introduce the concept of marked self-description.

\subsection{Marked Self-Describing Sets}

Including self-description in an object is an encoding technique on which many important learning theory examples are based.  Examples of self-description include the self-describing sets $\mathcal {SD} = \{ A\in \mathcal E : W_{min(A)} = A \}$, and the almost self-describing functions $\mathcal {ASD} = \{ f : \phi_{f(0)} =^* f \}$.  Many variants on the self-description theme have been explored in learning theory and inductive inference.

Our interest is in families that use carefully engineered self-description to calibrate the difficulty in identifying their members.  We will construct families whose members are not only self-describing, but also have their self-describing elements marked for ease of identification.  We say that such families exhibit \emph{marked self-description}.  In particular, we will use encapsulating objects that we call descriptors.

\begin{Definition}
For finite $X \subset \bbn$, a \emph{descriptor on the $i^{th}$-column} is a finite set $D = \{ \langle x,c_x,1, i \rangle : x \in X\}  \subseteq C_i(C_1)$ such that
\begin{enumerate}
\item $\sum_{x \in X} \signed(c_x) = 0$

\item $(\forall X' \subset X)\big(\sum_{x \in X'} \signed(c_x) \neq 0 \big)$

\item $\sum_{x \in X'} \signed(x) \geq 0$.
\end{enumerate}

Such a descriptor is said to \emph{describe} the natural number $n = \sum_{x \in X}\signed(x)$.  For $\langle x,c_x,1, i \rangle \in D$, we call $c_x$ the \emph{completion index} of the element.  For $n\in \bbn$, we define $\desc_i(n)$ to be the set of all descriptors on the $i^{th}$-column that describe $n$.
\end{Definition}

A descriptor can be thought of as a stream of data that includes parity bits to check the integrity of the data stream and where the intended message is the number described by the descriptor.  Thus, a machine can decide not only which elements are pieces of the descriptor (packets in the stream), but also decide when the entire descriptor has appeared in the enumeration (all the packets have been received).  By using a descriptor to encode the self-description for a set, we make the self-description instantly recognizable upon appearance in the enumeration.  For this reason, learning such a self-describing set can be achieved with no mind-changes.  In contrast to the degree to which we have made learning easier, we have potentially made \emph{efficient} learning harder.  By distributing the self-description into a large descriptor, we will create a scenario in which a very large amount of data is required to reach a correct decision.  We now proceed to our first result using these tools.

\begin{Lemma}\label{prt-t-not-o}
There is an indexed family $\seq{F}{n}$, where $F_n$ describes $n$, which is PRT[T]-learnable, but not PRT[O]-learnable.  We call this indexed family the \emph{marked self-describing sets} and designate it by $\mathcal{MSD}$.
\end{Lemma}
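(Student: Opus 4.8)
The plan is to construct $\mathcal{MSD}$ so that each $F_n$ consists of a marked self-description of $n$ — distributed as a descriptor in the first column $C_1$ — together with enough "padding" in other columns that extracting $n$ from a membership oracle is expensive, but extracting it from a teacher is cheap. Concretely, I would let $F_n = D_n \cup P_n$, where $D_n \in \desc_1(n)$ is some canonically chosen descriptor describing $n$ on column $C_1$, and $P_n$ is an initial segment $[0, g(n)] \otimes \{0\}$ of column $C_0$, where $g$ grows exponentially (say $g(n) = 2^n$). To make this an indexed family, I first check that the map $n \mapsto F_n$ is u.c.e.: the canonical descriptor $D_n$ should be computable from $n$ (one can take, e.g., the lexicographically least finite set $X$ with $\sum_{x\in X}\signed(x)=n$ meeting the completion-index conditions, and build the tuples $\langle x, c_x, 1, 1\rangle$), and $[0,2^n]\otimes\{0\}$ is obviously uniformly computable; hence $\{\langle a, i\rangle : a \in F_i\}$ is c.e. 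I also want the reference index to be faithful, so $\mi{\mathcal{MSD}}{F_n} = n$, which holds as long as the $F_n$ are pairwise distinct — guaranteed since distinct $n$ give descriptors describing distinct numbers, and the $C_1$-component of $F_n$ recovers $n$.

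For PRT[T]-learnability: define the teacher $T$ to forward only the elements of the incoming text that lie in column $C_1$ (equivalently, those of the form $\langle x, c, 1, 1\rangle$), discarding everything in other columns and suppressing repeats. Since any descriptor is a \emph{finite} set whose total parity is zero and each proper subset has nonzero parity, the learner can recognize exactly when it has received a complete descriptor: it waits until the signed completion indices of the received column-$C_1$ elements sum to zero, then computes $n = \sum \signed(x)$ over those elements and outputs the index $n$. The key point is that the number of computation steps the learner spends is bounded by a polynomial in the size of $D_n$, hence in $n$ — the teacher has stripped away the exponentially large padding $P_n$ before it ever reaches the learner — so this is PRT[T]-learning (with no mind-changes, in fact).

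For non-PRT[O]-learnability: suppose $M$ with a membership oracle PRT-learns $\mathcal{MSD}$ within a polynomial $p$. The obstacle — and the heart of the lemma — is to argue that on some target $F_n$, $M$ cannot both make few oracle queries and converge to the correct index. The idea is a diagonalization/adversary argument: present $M$ with a text for $F_n$ that begins with a long stretch of elements of the padding column $C_0$ (the point (first problem) in the efficiency discussion is moot here because the oracle is available, so instead we exploit that $M$ must decide which $n$ it is looking at, and the information distinguishing $F_n$ from $F_{n'}$ for $n' \neq n$ with similar padding is buried in column $C_1$ and in the exact extent $[0,2^n]$ of the padding). More precisely, for $M$ to output the index $n$ it must in effect determine $n$, and the only oracle-accessible witnesses to the value of $n$ are (a) the $O(n)$-sized descriptor $D_n$ sitting sparsely inside $C_1$, whose elements are large numbers, or (b) the right endpoint $2^n$ of the padding interval in $C_0$; locating either via membership queries, against an adversary that answers consistently with many candidate targets for as long as possible, requires a number of queries that is exponential in $n$. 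I would formalize this by building two targets $F_n$ and $F_{n'}$ ($n \neq n'$) that agree on all oracle answers $M$ has received after $p(n)$ many queries along a suitable text, forcing $M$ to give the same (hence wrong for one of them) hypothesis — exactly the style of argument used in Proposition \ref{prt-sans-to}, now upgraded so that the ambiguity survives a polynomial number of oracle queries because resolving it requires finding a needle (the descriptor, or the padding endpoint) in an exponentially large haystack. Tuning the growth rate $g$ and the sparsity of $D_n$ so that this query lower bound genuinely beats every fixed polynomial $p$ is the main technical step; everything else is bookkeeping.
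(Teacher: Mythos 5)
There is a genuine gap, and it is fatal to the construction as you have set it up. Your family is in fact PRT[O]-learnable. First, the padding $P_n=[0,2^n]\otimes\{0\}$ single-handedly gives the game away to an oracle learner: by the exponential query search algorithm (item (12) of the background section), the right endpoint $2^n$ of a column can be located with $O(n^{3})$ membership queries, and from it the learner reads off $n$ directly and halts in time polynomial in $n$. Second, even if you decouple the padding from $n$, a \emph{canonically chosen} descriptor $D_n$ (lexicographically least, computable from $n$) is equally easy to find with an oracle: the learner does not have to search an ``exponential haystack'' blindly, because it can compute the candidate sets $D_0,D_1,\dots,D_n$ and query their elements at their known locations; since $|D_k|$ and the time to compute $D_k$ are polynomial in $k$, confirming the least $k$ with $D_k\subseteq F$ costs only polynomially many queries in the true index. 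Your adversary argument ``answering consistently with many candidate targets'' cannot be carried out against a learner that knows exactly which finitely many numbers to ask about. This is precisely the step you defer as ``the main technical step,'' and no tuning of $g$ or of sparsity rescues a construction in which the location of the self-description is computable from the index alone.

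The paper's fix, which your proposal is missing, is to diagonalize against the would-be oracle learner inside the indexing itself: the index is decomposed as $n=\langle m,p^{*},i\rangle$ with $i\in\{0,1\}$, and $F_n\in\desc_0(n)$ is chosen so that $F_n\cap[0,q(p(\langle m,p^{*},1\rangle))]=\{\langle 0,1,1,0\rangle\}$, where $q(\ell)$ bounds the oracle queries that the specific machine $\phi_m$ makes on inputs drawn from $\langle 0,1,1,0\rangle^{\ell}$ when its oracle is the singleton $\{\langle 0,1,1,0\rangle\}$. Then the two targets $F_{\langle m,p^{*},0\rangle}$ and $F_{\langle m,p^{*},1\rangle}$ agree below every number $\phi_m$ can query within its budget $p(\langle m,p^{*},1\rangle)$, and on texts beginning with $\langle 0,1,1,0\rangle^{p(\langle m,p^{*},1\rangle)}$ the machine receives identical inputs and identical oracle answers for both, so it converges to a hypothesis that is wrong for at least one of them --- this is the ``upgraded Proposition \ref{prt-sans-to}'' pair you gesture at, but it only exists because the construction consults $\phi_m$ and $p$ when placing the descriptor. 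Your teacher-side argument is essentially right in spirit (filter to the descriptor column, detect completion via the zero-sum condition), though note the paper additionally has the teacher emit $\min(D)$ exactly $n$ times so the learner merely counts, keeping its run-time linear in $n$ independently of how large the elements of the descriptor are.
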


\begin{proof}
Fix $n \in \bbn$ and let learning machine $M$ and polynomial $p$ be such that $n = \langle m, p^*, i \rangle$, where $\phi_m = M$ and $i \in \{0,1\}$.  Without loss of generality, we may assume that $p$ is increasing.  Consider the situation where $M$ has access to the membership oracle for the singleton $\{ \langle 0,1,1,0 \rangle \}$ and define a computable function $q$ such that, for $\ell \in \mathbb N$, $q(\ell)$ is the greatest number about which $M$ queries the oracle when it receives inputs which are substrings of $\langle 0,1,1,0 \rangle^{\ell}$.  Note that $q$ is an increasing function.  Define $F_n$ to be a member of $\desc_0(n)$ such that
\begin{align}
F_n \cap [0, q(p(\langle m, p^*, 1 \rangle))] = \{ \langle 0,1,1,0 \rangle \}\label{msd-formula}
\end{align}
and chosen according to a fixed algorithm so that $\mathcal{MSD} = \seq{F}{n}$ is u.c.e.  

First, we show that $\mathcal{MSD}$ is PRT[T]-learnable.  We define a teacher $T$ as follows.  If $\content{\sigma}$ is not a descriptor, $T(\sigma)$ is the empty string.  If $D = \content{\sigma}$ describes $n$, then $T(\sigma) = \min(D)^i$ if $|\sigma| = |\sigma_0| + i$ where $\sigma_0$ is the shortest intial segment of $\sigma$ whose content contains $D$ and $i < n$; if $i = n$ then $T(\sigma) = \min(D)^n$.  Having output $\min(D)$ $n$ times, $T$ proceeds by enumerating $D$ in decreasing order.  Let $M$ be a machine that reads the output of $T$ and returns the number of elements in the output of $T$.  The teacher-learner pair learns $\mathcal{MSD}$ and the run-time of the learner is linear in the index of the target.

We now show that $\mathcal{MSD}$ is not PRT[O]-learnable.  To prove that $\mathcal{MSD}$ is not PRT[O]-learnable, fix a learner $M = \phi_m$, an increasing polynomial $p$ encoded by $p^*$, $n_0 = \langle m, p^*, 0 \rangle$ and $n_1 = \langle m, p^*, 1 \rangle$.  If $M$ PRT[O]-learns $\mathcal{MSD}$ with polynomial bound $p$, then it must succeed at identifying $F_{n_0}$ and $F_{n_1}$ within $p(n_1) \geq p(n_0)$ computation stages.  Choose $T_0$ and $T_1$ to be any enumerations of $F_{n_0}$ and $F_{n_1}$, respectively, which have $\langle 0,1,1,0 \rangle^{p(n_1)}$ as an initial segment.  When trying to identify $T_0$ and $T_1$, the learner must reach its final hypothesis before finding any elements of the target sets, $F_{n_0}$ and $F_{n_1}$, other than $\langle 0,1,1,0 \rangle$.  Whatever hypothesis $M$ converges to before completing the $p(n_1)$ length initial segment of either enumeration cannot code both sets.  Thus, $M$ fails to learn at least one of the two sets.  Since $M$ and $p$ were chosen arbitrarily we conclude that $\mathcal{MSD}$ is not PRT[O]-learnable.

\end{proof}

\begin{Lemma}\label{not-prt-t}
If $\mathcal F = \seq{F}{n}$ is an indexed family, $p$ a polynomial and there are indices $a, b_0, b_1, \ldots b_{p(a)-1}$ such that $F_{b_0} \subset F_{b_1} \subset \ldots \subset F_{p(a)-1} \subset F_a$, then $\mathcal F$ is not PRT[T]-learnable with polynomial bound $p$.
\end{Lemma}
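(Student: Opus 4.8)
The plan is to suppose, toward a contradiction, that some learner--teacher pair $(M,T)$ PRT[T]-learns $\mathcal F$ with bound $p$; we may assume $p$ is increasing, so on any text $g$ of $F_a$ the learner $M$ must converge on $T \circ g$ within $p(\mi{\mathcal F}{F_a}) \le p(a)$ computation steps. Since at most one member of a proper chain can be empty -- namely the least -- discarding $F_{b_0}$ if necessary yields a proper chain $G_0 \subset G_1 \subset \cdots \subset G_r$ of nonempty members of $\mathcal F$ with $G_r = F_a$ and $r+1 \ge p(a)$. The goal is then to build a single text $g$ of $F_a$ on which $M$ is forced to emit the $r+1$ correct indices for $G_0, \dots, G_r$ -- which are pairwise distinct, since the $G_i$ are -- in this order, so that $M$ cannot reach its final hypothesis before reading its $(r+1)$st input symbol and hence needs at least $r+1 \ge p(a)$ steps, a contradiction.

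To produce $g$ I would build nested texts. Let $f^{(0)}$ be any text of $G_0$. Inductively, given a text $f^{(i)}$ of $G_i$, let $h_i$ be the correct index $M$ converges to on $T \circ f^{(i)}$, say at position $c_i$ of its hypothesis stream, and pick $M_i \ge M_{i-1}$ large enough that $T(f^{(i)} \upto M_i)$ has length at least $c_i$; this is possible since $T$ is monotone and $T \circ f^{(i)}$ has length at least $c_i$. Because $\content{f^{(i)} \upto M_i} \subseteq G_i \subseteq G_{i+1}$, we may take $f^{(i+1)}$ to be any text of $G_{i+1}$ extending $f^{(i)} \upto M_i$. Set $g = f^{(r)}$ and $L_i = |T(f^{(i)} \upto M_i)|$; monotonicity of $T$ together with the $M_i$ being nondecreasing makes $L_0 \le L_1 \le \cdots$.

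The crux -- where I use that the learner in a PRT[T] pair is oracle-free and that $T$ is a deterministic function -- is this. For each $j < r$ the text $g = f^{(r)}$ agrees with $f^{(j)}$ on its first $M_j$ entries, so $T \circ g$ agrees with $T \circ f^{(j)}$ on its first $L_j$ symbols; since $M$ has already converged to $h_j$ by position $c_j \le L_j$ on $T \circ f^{(j)}$, the hypothesis $M$ emits at position $L_j$ of $T \circ g$ is exactly $h_j$. As the $h_j$ are pairwise distinct, this forces $L_0 < L_1 < \cdots < L_{r-1}$, and forces the eventual convergence of $M$ on $T \circ g$ -- which must occur, to an index $h_r \notin \{h_0, \dots, h_{r-1}\}$ -- to happen at some position $P > L_{r-1}$. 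Since $L_0 \ge 1$ (a learner emits its first hypothesis only after reading at least one symbol) and the $L_j$ strictly increase, $P \ge r+1 \ge p(a)$; so $M$ reads at least $p(a)$ symbols of $T \circ g$ before converging, which costs at least $p(a)$ steps, contradicting the bound $p(\mi{\mathcal F}{F_a}) \le p(a)$.

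The only real obstacle is the construction in the second paragraph: one must choose the $M_i$ so that determinism and monotonicity of $T$ carry a shared prefix through all of $T \circ f^{(0)}, \dots, T \circ f^{(r)}$, trapping the learner into rehearsing every intermediate hypothesis, while keeping the $M_i$ nondecreasing so that this structure is not disturbed at later stages. Everything after that is the counting in the third paragraph, and the fact that the hypothesised chain has $p(a)+1$ sets leaves exactly enough slack to overrun the ``fewer than $p(a)$ steps'' budget even in the worst case, when one set in the chain is empty.
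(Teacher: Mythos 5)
Your proposal is correct and follows essentially the same route as the paper's proof: build a nested sequence of initial segments of texts for $F_{b_0}\subset F_{b_1}\subset\cdots\subset F_a$ on each of which the pair $(M,T)$ has committed to the (pairwise distinct) index of the corresponding set, extend to a text of $F_a$, and conclude that the $p(a)$ forced hypothesis changes overrun the budget of fewer than $p(a)$ steps. Your version merely adds bookkeeping the paper leaves implicit (tracking the lengths $L_j$ of the teacher's output, and discarding a possibly empty least element of the chain), so no further comment is needed.
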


\begin{proof}
Let $\mathcal F$, $p$, $a$ and $b_0, \ldots, b_{p(a)-1}$ be as in the statement and let $(M, T)$ be an arbitrary learner-teacher pair.  Let $\sigma_0$ be an initial segment of an enumeration of $F_{b_0}$ on which $M \circ T$ outputs an index for $F_{b_0}$ (if no such $\sigma_0$ exists, then $(M, T)$ has already failed to learn $\mathcal F$).  Given $\sigma_n$, an initial segment of an enumeration of $F_{b_n}$ for $n < p(a)-1$, define $\sigma_{n+1}$ to be an initial segment of an enumeration of $F_{b_{n+1}}$ extending $\sigma_n$ on which $(M, T)$ outputs an index for $F_{b_{n+1}}$.  Again, if no such extension can be found, then $M$ has failed to learn $\mathcal F$.  Let $T$ be an enumeration of $F_a$ which has $\sigma_{p(a)-1}$ as an initial segment.  Since $M$ changes hypothesis at least $p(a)$ times on $\sigma_{p(a)-1}$, either $M$ fails to identify $T$ or the runtime of the learner cannot be bounded by $p(a)$.

\end{proof}

\begin{Lemma}\label{prt-o-not-t}
There is a PRT[O]-learnable indexed family that is not PRT[T]-learnable.  We call this indexed family the \emph{column self-describing sets} and designate it by $\mathcal{CSD}$.
\end{Lemma}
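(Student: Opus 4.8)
The plan is to realize $\mathcal{CSD}$ as a two-column family whose members are self-describing across their columns, with the two coordinates of the index recorded in intervals of opposite character — one an initial segment $[0,g]$, the other a final segment $[k,\infty)$. For $n=\langle g,k\rangle$ set
\[
F_n \;=\; \big([0,g]\otimes\{0\}\big)\ \cup\ \big([k,\infty)\otimes\{1\}\big),
\]
so that $C_0(F_n)=[0,g]\otimes\{0\}$, $C_1(F_n)=[k,\infty)\otimes\{1\}$, and $C_i(F_n)=\emptyset$ for $i\ge 2$. Membership in $F_n$ is uniformly decidable in $n$, so $\mathcal{CSD}=\seq{F}{n}$ is an indexed family; and since $g$ is the largest $a$ with $\langle a,0\rangle\in F_n$ and $k$ the least $a$ with $\langle a,1\rangle\in F_n$, the index $n=\langle g,k\rangle$ is recoverable from $F_n$, so distinct indices name distinct sets and $\mi{\mathcal{CSD}}{F_n}=n$. (The first coordinate $g$ serves as a group label: all the sets $F_{\langle g,j\rangle}$ share the $C_0$-part $[0,g]\otimes\{0\}$.)

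To show $\mathcal{CSD}$ is PRT[O]-learnable I would let the learner ignore the data of the text (skipping each datum at the fixed cost permitted in the preliminaries) and, given the membership oracle for the target $F_n$, recover $g$ and $k$ with the exponential query search algorithm: doubling along the addresses $\langle 0,0\rangle,\langle 1,0\rangle,\langle 2,0\rangle,\dots$ and then binary-searching locates the right endpoint $g$ of $[0,g]\otimes\{0\}$ in $O(\log(g+2))$ queries, and the same procedure on the first column locates the least $a$ with $\langle a,1\rangle\in F_n$, namely $k$, in $O(\log(k+2))$ queries; the learner then outputs $\langle g,k\rangle$ and never revises. Since $\langle g,k\rangle\ge\max(g,k)$ by the choice of pairing, both the running time and the oracle use are polynomial in $\log(g+k+2)$, hence bounded by $p(\mi{\mathcal{CSD}}{F_n})$ for a suitable fixed polynomial $p$. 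So $\mathcal{CSD}$ is PRT[O]-learnable.

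That $\mathcal{CSD}$ is not PRT[T]-learnable I would obtain from Lemma \ref{not-prt-t}. Fix an arbitrary polynomial $p$, choose any group $g$, and put $a=\langle g,0\rangle$. The sets $F_{\langle g,j\rangle}$, $j\in\bbn$, all share their $C_0$-part while their $C_1$-parts $[j,\infty)\otimes\{1\}$ strictly decrease, so $F_{\langle g,0\rangle}\supsetneq F_{\langle g,1\rangle}\supsetneq F_{\langle g,2\rangle}\supsetneq\cdots$ is an infinite strictly descending chain. Setting $b_i=\langle g,p(a)-i\rangle$ for $i<p(a)$ then yields indices with $F_{b_0}\subsetneq F_{b_1}\subsetneq\cdots\subsetneq F_{b_{p(a)-1}}\subsetneq F_a$, which is exactly the hypothesis of Lemma \ref{not-prt-t}; hence $\mathcal{CSD}$ is not PRT[T]-learnable with bound $p$. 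As $p$ was arbitrary, $\mathcal{CSD}$ is not PRT[T]-learnable, and together with the previous paragraph this proves the lemma.

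The content is entirely in the choice of encoding, and it is worth recording where the asymmetry lies: the right endpoint of $[0,g]$ is witnessed from below by text elements that are $\le g$, so a teacher could relay it with at most $g$ revisions, whereas the left endpoint $k$ of $[k,\infty)$ is witnessed only by text elements that are $\ge k$, which an adversary may make astronomically large, so no learner running on a teacher-filtered text can locate $k$ within a bound depending only on the index — and that is precisely the infinite descending chain inside $C_1$ that Lemma \ref{not-prt-t} exploits. A membership oracle is immune because it probes $C_1$ at the small addresses $\langle 0,1\rangle,\langle 1,1\rangle,\dots$ directly. The only routine checks are that the exponential query search algorithm indeed applies to the membership and non-membership intervals sitting inside a single column, and that it uses only polylogarithmically many queries in the target index; all the rest is immediate.
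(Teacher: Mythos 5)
Your proof is correct, but your $\mathcal{CSD}$ is a genuinely different family from the paper's. The paper's sets are all finite: $A_n$ and $B_{n,i}$ are unions of finite initial-segment columns $[0,a_n+j]\otimes\{j\}$, the index is recovered by an oracle search for the highest non-empty column followed by a search for its maximum, and the ascending chains $B_{n,0}\subset\cdots\subset B_{n,p_n(a_n)-1}\subset A_n$ feeding Lemma \ref{not-prt-t} are manufactured one polynomial at a time, with the recursion $a_n=n+1+\sum_{i<n}p_i(a_i)$ reserving for the $n$-th polynomial $p_n$ a block of indices long enough to hold a chain of length $p_n(a_n)$ topped by $F_{a_n}$. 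You instead place a co-finite column $[k,\infty)\otimes\{1\}$ in each set, so that for fixed $g$ the sets $F_{\langle g,j\rangle}$ form one infinite strictly descending chain, from which an ascending chain of length $p(\langle g,0\rangle)$ ending at $F_{\langle g,0\rangle}$ can be extracted for an arbitrary polynomial $p$; this defeats every polynomial at once with no enumeration of polynomials and no index bookkeeping. Both arguments reduce the negative half to Lemma \ref{not-prt-t}, and both positive halves are doubling-plus-binary search at small oracle addresses (your search for the left endpoint of $[k,\infty)$ is the mirror image of the paper's search for a right endpoint, and your note that $\mi{\mathcal{CSD}}{F_{\langle g,k\rangle}}=\langle g,k\rangle$ because both endpoints are recoverable from the set is exactly the point that needs checking). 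What your version buys is brevity; what the paper's buys is that every member of $\mathcal{CSD}$ is a finite set, which is convenient when the family is reused later (e.g., in Lemma \ref{prt-to-not-t-o} and in the PSD and PMC sections).
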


\begin{proof}
Define 
\begin{align}
a_n = n + 1 + \sum_{i = 0}^{n-1} p_i(a_i),\label{csd-formula-1}
\end{align}
where $p_i$ is the polynomial such that $p_i^* = i$.  Fix $n \in \bbn$ and define $A_n = [0, a_n]\otimes\{p_n(a_n)\} \allowbreak\cup \bigcup_{i < p(a_n)} [0, a_n+i]\otimes\{i\}$ and $B_{n,i} = \bigcup_{j \leq i} [0,a_n+j]\otimes\{j\}$, for $i < p(a_n)$.  Finally, define $F_n = A_i$ if $n = a_i$ and $F_n = B_{i,j}$ if $n = a_i + j$ where $j < p_i(a_i)$.  Let $\mathcal{CSD} = \seq{F}{n}$.  To PRT[O]-learn $\mathcal{CSD}$, define $M$ to be a learning machine that uses the exponential query search algorithm to find the highest index non-empty column, queries about the members of the column, in increasing order, until the greatest element is found, and returns the value of this element.  Since the number of queries involved is polynomially bounded in $e$, $M$ witnesses the desired learnability.

Since $B_{n,0} \subset B_{n,1} \subset \cdots \subset B_{n,p(a_n)-1} \subset A_n$, for each polynomial, $p$, there is a subfamily of $\mathcal F$ that cannot be PRT[T]-learned with efficiency bound $p$.  Thus, $\mathcal F$ is not PRT[T]-learnable.

\end{proof}

Finally, we wish to distinguish PRT[T,O]-learning from both PRT[T]-learning and PRT[O]-learning.

\begin{Lemma}\label{prt-to-not-t-o}
There is an indexed family which is PRT[T,O]-learnable, but neither PRT[T]-learnable nor PRT[O]-learnable.
\end{Lemma}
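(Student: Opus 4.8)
The plan is to merge the two obstructions into a single family: place a copy of $\mathcal{MSD}$ and a copy of $\mathcal{CSD}$ on two disjoint, computably separated ``regions'' of $\bbn$, tagging every element of an $\mathcal{MSD}$-set as $\langle e,0\rangle$ and every element of a $\mathcal{CSD}$-set as $\langle e,1\rangle$, so that a single datum already reveals which region the target inhabits and no set of the combined family straddles both regions. Concretely I would define an indexed family $\mathcal N$ in which the index $\langle n,0\rangle$ carries the region-$0$ tagged version of the $n^{th}$ set of $\mathcal{MSD}$, the index $\langle n,1\rangle$ carries the region-$1$ tagged version of the $n^{th}$ set of $\mathcal{CSD}$, and every remaining index carries a fixed harmless nonempty set disjoint from both regions. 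The only change to the $\mathcal{MSD}$ construction is cosmetic: in \eqref{msd-formula} one reads $\phi_m$ as the $\mathcal N$-learner (with its membership oracle) coded by $m$ rather than an $\mathcal{MSD}$-learner, and replaces the polynomial $p$ by its composition with the reindexing map; since reindexing and tagging are polynomial-time and onto, $\mathcal N$ remains u.c.e.

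\emph{PRT[T,O]-learnability.} I would build one teacher $T$ and one learner $M$ that dispatch on the region of the target. On input $\sigma$, $T$ reads the tag of the first element: in region $0$ it strips the tags and runs exactly the teacher of Lemma \ref{prt-t-not-o} (which is prefix-monotone and respects $\content{T(\sigma)}\subseteq\content{\sigma}$), in region $1$ it acts as the identity, and on mixed or empty input it outputs the empty string; this $T$ is a valid teacher. The learner $M$, using its oracle, likewise reads the region from the first element: in region $0$ it simulates the learner of Lemma \ref{prt-t-not-o} (suitably adjusted to report the $\mathcal N$-index $\langle n,0\rangle$), and in region $1$ it simulates the PRT[O]-learner of Lemma \ref{prt-o-not-t} --- exponential query search for the top nonempty column, then a scan for its greatest element, reporting the corresponding $\mathcal N$-index. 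In both regions convergence and oracle use are bounded by a polynomial --- linear in the MSD case, polynomial in the CSD case --- in $\mi{\mathcal N}{F}$, which by disjointness of the regions equals the tagged index of $F$; hence $\mathcal N$ is PRT[T,O]-learnable.

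\emph{Not PRT[O] and not PRT[T].} For the first, I would re-run the argument of Lemma \ref{prt-t-not-o} inside region $0$: given a purported PRT[O]-learner $M=\phi_m$ with bound $p$, the region-$0$ sets of $\mathcal N$ at indices $\langle(m,p^*,0),0\rangle$ and $\langle(m,p^*,1),0\rangle$ agree with the singleton $\{\langle 0,1,1,0\rangle\}$ past every number $M$ can query within its step budget on a text beginning with a sufficiently long power of $\langle 0,1,1,0\rangle$, so $M$ must commit to a single hypothesis that cannot code both sets --- a contradiction. For the second, I would apply Lemma \ref{not-prt-t} to region $1$: after the reindexing, the recursion \eqref{csd-formula-1} still yields, for every polynomial $p'$, a nested chain $F_{b_0}\subset\cdots\subset F_{b_{p'(a)-1}}\subset F_a$ of $\mathcal N$-indices --- one picks $n$ coding a polynomial that dominates the composition of $p'$ with the reindexing map --- so no teacher-learner pair PRT[T]-learns $\mathcal N$ with bound $p'$; as $p'$ was arbitrary, $\mathcal N$ is not PRT[T]-learnable.

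The step I expect to be the main obstacle is the self-reference bookkeeping in region $0$: the marked self-describing sets must be defined against learners for the \emph{combined} family $\mathcal N$, which also see the CSD region and carry an oracle, rather than against $\mathcal{MSD}$-learners, and one must verify that this fixed-point construction (already present in Lemma \ref{prt-t-not-o}) survives the reindexing and the extra region while keeping $\mathcal N$ uniformly c.e. A secondary point worth checking is that the dispatching teacher and learner never encounter an enumeration mixing the two regions --- true because each member of $\mathcal N$ lies wholly in one region --- so that the ``first element fixes the region'' dispatch is well defined on every legitimate input.
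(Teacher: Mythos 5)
Your proposal is correct and follows essentially the same route as the paper: merge the two witness families on disjoint, recognizably separated index classes (the paper interleaves them as even/odd indices and dispatches with a single oracle query about $0$, while you tag the elements and dispatch on the first datum), adjust the self-referential polynomial bounds to absorb the reindexing (the paper's factors of $3$ play the role of your composition with the reindexing map), and observe that each original non-learnability argument survives inside its own region. The one detail to tidy is that your region-$0$ teacher must re-tag the elements it emits rather than output stripped ones, since $\content{T(\sigma)}\subseteq\content{\sigma}$ is required of a teacher; this is a trivial repair.
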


\begin{proof}
To prove the claim, we must combine the strategies used in the proofs of Lemma \ref{prt-t-not-o} and Lemma \ref{prt-o-not-t}.  Define $F_n$ exactly as the members of $\mathcal{MSD}$ are defined except we modify formula (\ref{msd-formula}) to be
\[
F_n \cap [0, q(p(3\langle m, p^*, 1 \rangle))] = \{ \langle 0,1,1,0 \rangle \}.
\]
We also define $G_n$ exactly as the members of $\mathcal{CSD}$ are defined except that we replace formula (\ref{csd-formula-1}) by
\[
a_n = 3(n + 1) + \sum_{i = 0}^{n-1} p_i(3a_i)
\]
Finally, we define $\mathcal H = \seq{H}{n}$ where
\[
H_n = \begin{cases}
G_i &\mbox{if } n = 2i\\
F_i &\mbox{if } n = 2i+1
\end{cases}.
\]
We will show that $\mathcal H = \seq{H}{n}$ is PRT[T,O]-learnable, but neither PRT[T]-learnable nor PRT[O]-learnable.  To PRT[T,O]-learn $\mathcal H$, let $M$ be a learner which first determines if the target set contains 0 using an oracle query.  If the target does, then $M$ proceeds as the PRT[O]-learner in the proof of Lemma \ref{prt-o-not-t}, multiplying the hypotheses output by that learner by 2.  If the target does not contain 0, then $M$ proceeds as the PRT[T]-learner in the proof of Lemma \ref{prt-t-not-o}, multiplying the hypotheses output by that learner by 2 and adding 1.  $M$ PRT[T,O]-learns $\mathcal H$ with only a linear decrease in efficiency compared to the two learners from the previous Lemmas.

To see that $\mathcal H$ is neither PRT[O]-learnable nor PRT[T]-learnable, observe that the proofs of Lemmas \ref{prt-t-not-o} and \ref{prt-o-not-t} suffice to show that $\mathcal H$ contains two indexed subfamilies, one of which fails to be PRT[O]-learnable and the other fails to be PRT[T]-learnable.

\end{proof}

For clarity, we summarize the results of Section \ref{prt-sec} in the following theorem.

\begin{Theorem}
\ 
\begin{enumerate}
\item $PRT \subset PRT[O] \subset PRT[T,O]$,
\item $PRT \subset PRT[T] \subset PRT[T,O]$,
\item $PRT[O] \setminus PRT[T] \neq \emptyset$,
\item $PRT[T] \setminus PRT[O] \neq \emptyset$.
\end{enumerate}
\end{Theorem}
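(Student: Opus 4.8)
The plan is to assemble the theorem directly from the proposition and lemmas already proved in this section; no new construction is required, so the ``proof'' is essentially bookkeeping. First I would invoke Proposition \ref{prt-t-o}, which records the chains $PRT \subset PRT[O] \subseteq PRT[T,O]$ and $PRT \subset PRT[T] \subseteq PRT[T,O]$, with the strictness of $PRT \subset PRT[O]$ and $PRT \subset PRT[T]$ witnessed by $\mathcal H_2$ (PRT[O]- and PRT[T]-learnable by Examples \ref{prt-o-ex} and \ref{prt-t-ex}, not PRT-learnable by Proposition \ref{prt-sans-to}).

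Next, to upgrade the two remaining inclusions $PRT[O] \subseteq PRT[T,O]$ and $PRT[T] \subseteq PRT[T,O]$ to proper inclusions, I would appeal to Lemma \ref{prt-to-not-t-o}: the family $\mathcal H$ constructed there lies in $PRT[T,O]$ but in neither $PRT[O]$ nor $PRT[T]$, hence separates $PRT[T,O]$ from each. Combined with the previous paragraph this gives items (1) and (2) of the statement.

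For item (3) I would cite Lemma \ref{prt-o-not-t}, which exhibits $\mathcal{CSD}$, a PRT[O]-learnable indexed family that is not PRT[T]-learnable, so $PRT[O]\setminus PRT[T]\neq\emptyset$; and for item (4) I would cite Lemma \ref{prt-t-not-o}, which exhibits $\mathcal{MSD}$, PRT[T]-learnable but not PRT[O]-learnable, so $PRT[T]\setminus PRT[O]\neq\emptyset$. (Items (3) and (4) also re-imply the properness in (1) and (2) once one notes that a disjoint-columns union of $\mathcal{CSD}$ and $\mathcal{MSD}$ would separate $PRT[T,O]$ from both $PRT[O]$ and $PRT[T]$, but Lemma \ref{prt-to-not-t-o} already packages a single such witness.)

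There is no real obstacle at this stage: all the technical content — verifying the separating families are u.c.e.\ indexed families, and that the ``fails for every polynomial bound $p$'' arguments of Lemmas \ref{not-prt-t} and \ref{prt-t-not-o} yield outright non-learnability rather than merely non-learnability with a fixed bound — was discharged in the cited proofs. The only thing to be careful about is citing the right lemma for the right separation (PRT[T] vs.\ PRT[O] is asymmetric: $\mathcal{MSD}$ goes one way, $\mathcal{CSD}$ the other), which I would do by matching each bullet of the theorem to its source lemma as above.
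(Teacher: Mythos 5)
Your proposal is correct and matches the paper's proof, which likewise assembles the theorem from Proposition \ref{prt-t-o} and Lemmas \ref{prt-t-not-o}, \ref{prt-o-not-t} and \ref{prt-to-not-t-o} with no new argument. You match each separation to the right source lemma, so there is nothing to add.
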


\begin{proof}
All of the claims in the statement follow from Lemmas \ref{prt-t-not-o}, \ref{prt-o-not-t} and \ref{prt-to-not-t-o} and Proposition \ref{prt-t-o}.

\end{proof}

\section{Polynomial Size Dataset}

\begin{Definition}\label{psd-def}
An indexed family, $\mathcal F = \seq{F}{n}$, is \emph{polynomial size dataset learnable} (\emph{PSD-learnable}) if there is a machine $M$ and a polynomial $p$ such that for any enumeration $f$ of $F \in \mathcal F$, $M$ converges to a correct index on an initial segment $f \upto n$ such that $|\{ f(x) : x < n \}| < p(\mi{\mathcal F}{F})$.  If an oracle is accessed, oracle use must also be bounded by $p(\mi{\mathcal F}{F})$.
\end{Definition}

Note that oracle use bounds both the queries to which the oracle reponds in the positive and those to which it responds in the negative.  We shall apply Definition \ref{def-teacher-oracle} to Definition \ref{psd-def} much as we did in the case of Definition \ref{prt-def}.

\begin{Proposition}
$\text{PSD} \subseteq \text{PSD}[O] \subseteq \text{PSD}[T,O]$ and $\text{PSD} \subseteq \text{PSD}[T] \subseteq \text{PSD}[T,O]$.
\end{Proposition}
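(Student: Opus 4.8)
The plan is to verify all four containments directly from the definitions, in the spirit of Proposition~\ref{prt-t-o}. The two ingredients are the \emph{identity teacher} $T_{\mathrm{id}}$, defined by $T_{\mathrm{id}}(\sigma)=\sigma$, and the observation that a learner equipped with an oracle is always free not to consult it.

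First, for $\text{PSD}\subseteq\text{PSD}[O]$ and $\text{PSD}\subseteq\text{PSD}[T]$: if $M$ witnesses that $\mathcal F$ is PSD-learnable with polynomial $p$, then the same $M$, regarded as a learner with access to a membership oracle that it never queries, witnesses PSD[O]-learnability --- the dataset on which it converges is unchanged and its oracle use is identically $0$, hence bounded by $p$. Likewise $(M,T_{\mathrm{id}})$ witnesses PSD[T]-learnability, once one checks that $T_{\mathrm{id}}$ is a legitimate teacher (it is computable, prefix-monotone, and $\content{T_{\mathrm{id}}(\sigma)}=\content{\sigma}\subseteq\content{\sigma}$) and notes that every enumeration of the form $T_{\mathrm{id}}\circ f$ is simply an enumeration $f$ of a member of $\mathcal F$, so $M$ converges on it within the same dataset bound.

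Next, $\text{PSD}[O]\subseteq\text{PSD}[T,O]$: if $M$ witnesses PSD[O]-learnability with polynomial $p$, then $(M,T_{\mathrm{id}})$ witnesses PSD[T,O]-learnability, since $T_{\mathrm{id}}\circ f=f$ means the learner sees exactly the same text and issues exactly the same queries, so both the dataset bound and the oracle-use bound transfer verbatim. Symmetrically, $\text{PSD}[T]\subseteq\text{PSD}[T,O]$: if $(M,T)$ witnesses PSD[T]-learnability, the same pair with $M$ declining to use the oracle witnesses PSD[T,O]-learnability, with oracle use $0$; no change to $T$ is needed even though in the PSD[T,O] model the teacher is permitted to see the learner's query responses, as it can simply disregard them.

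I do not expect any genuine obstacle here. The only points needing a moment's attention are (i) that ``oracle use $0$'' counts as polynomially bounded, and (ii) that the dataset-size measurement in the teacher models --- taken with respect to the text $T\circ f$ actually presented to the learner --- is unaffected by composition with the identity teacher (since $\content{T_{\mathrm{id}}(\sigma)}=\content{\sigma}$). More generally, because $\content{T(\sigma)}\subseteq\content{\sigma}$ for any teacher $T$, interposing a teacher can never enlarge the dataset the learner must process, which is the structural reason these inclusions hold.
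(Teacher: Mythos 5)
Your proof is correct and takes the same route as the paper, which simply asserts that the claim follows from the definitions of PSD, PSD[T], PSD[O] and PSD[T,O]; your write-up is that definitional verification made explicit via the identity teacher and an unused oracle. No gap here.
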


\begin{proof}
The claim follows from the definitions of PSD, PSD[T], PSD[O] and PSD[T,O].

\end{proof}

Unlike PRT-learning, there are non-trivial PSD-learnable indexed families.

\begin{Example}\label{psd-ex}
Let $\mathcal F$ be an indexed family containing all the finite sets such that $\mi{\mathcal F}{F} = \langle |F|, e \rangle$, where $e$ is the canonical code for $F$.  $\mathcal F$ is PSD-learnable by the learning machine $M$ where $M(a_0 \cdots a_n) = \langle |\content{a_0 \cdots a_n}|,a \rangle$, where $a$ is the canonical code $\content{a_0 \cdots a_n}$.
\end{Example}

\begin{Example}\label{psd-t-o-not-ex}
Let $F_n = [0, 2^n]$.  $\mathcal F = \seq{F}{n}$ is PSD[O]-learnable by a learning machine that uses the exponential query search algorithm to find the greatest element.  $\mathcal F$ is PSD[T]-learnable by the pair $(M,T)$ where $M(a_0^{k_0}, \ldots, a_{n-1}^{k_{n-1}}) = k_0$ and $T(a_0 \cdots a_{k+1}) = a_0^n a_1 \ldots, a_{k+1}$ if $2^n \leq \max\{a_0, \ldots, a_{k+1}\} < 2^{n+1}$ and $\max\{a_0, \ldots, a_{k}\} \allowbreak< 2^n$.  $\mathcal F$ is not PSD-learnable as the learner may be forced to receive $2^{n-1}$ distinct elements before converging to a correct hypothesis. 
\end{Example}

\begin{Lemma}\label{psd-t-not-o}
There is an indexed family which is PSD[T]-learnable, but not PSD[O]-learnable.
\end{Lemma}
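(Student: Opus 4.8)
The plan is to reuse the marked self-describing set construction from Lemma \ref{prt-t-not-o} and the descriptor machinery, re-tuning the thresholds so that the obstacle is now a \emph{polynomial size dataset} bound rather than a polynomial run-time bound. For the positive half — PSD[T]-learnability — the same teacher/learner strategy should work essentially verbatim: the teacher recognizes when the content of $\sigma$ is a full descriptor $D$ describing some $n$, then first emits $\min(D)$ exactly $n$ times and afterwards enumerates $D$ in decreasing order; the learner outputs the number of symbols it has seen from the teacher. Since a descriptor describing $n$ is a fixed finite set, the learner converges after seeing a dataset whose size is bounded by $n + |D|$, and one arranges the u.c.e.\ choice of $F_n$ so that $|D|$ is polynomially (in fact linearly) bounded in the index; hence the dataset size is polynomial in the index and the family is PSD[T]-learnable.

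For the negative half — not PSD[O]-learnable — I would follow the diagonalization skeleton of Lemma \ref{prt-t-not-o}, but replace the run-time threshold $q(p(\langle m,p^*,1\rangle))$ by a threshold tied to the \emph{dataset size and oracle use} the candidate learner consumes. Fix a candidate learner $M=\phi_m$ with polynomial bound $p$, set $n_i = \langle m, p^*, i\rangle$ for $i\in\{0,1\}$, and define a computable increasing function $q$ so that $q(\ell)$ bounds the largest number about which $M$ queries when fed substrings of $\langle 0,1,1,0\rangle^\ell$ while also accounting for at most $p(n_1)$ oracle queries; then choose $F_{n_0}\in\desc_0(n_0)$ and $F_{n_1}\in\desc_0(n_1)$ so that both agree with the singleton $\{\langle 0,1,1,0\rangle\}$ below $q(p(n_1))$. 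Feed $M$ the enumerations $T_0, T_1$ of $F_{n_0}, F_{n_1}$ that begin with a long block $\langle 0,1,1,0\rangle^{p(n_1)}$; since the PSD[O]-bound forces $M$ to converge before its dataset exceeds $p(n_1)$ distinct elements, and on these prefixes the only distinct element ever seen is $\langle 0,1,1,0\rangle$ — and every oracle query the learner can afford is answered identically for both targets because they coincide below $q(p(n_1))$ — the learner is at a point where it cannot distinguish $F_{n_0}$ from $F_{n_1}$, yet it must have already committed to a final hypothesis. No single hypothesis codes both sets, so $M$ fails on one of them.

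The main obstacle, and the point requiring the most care, is the circularity in the threshold definition: $q$ must bound the oracle use of $M$ on the relevant prefixes, but those prefixes have length $p(n_1)$, and $n_1$ itself encodes $p^*$, so one must check that $q(p(n_1))$ is well-defined and computable from $\langle m, p^*\rangle$ and that the family $\{F_n\}$ so constructed is genuinely u.c.e.\ — exactly the bookkeeping handled implicitly in Lemma \ref{prt-t-not-o}, now with the extra wrinkle that the oracle is allowed up to $p(n_1)$ \emph{negative} answers as well, which one must fold into the definition of $q$. Once that is set up, the argument that the learner cannot have converged to a correct hypothesis before its polynomially-bounded dataset is exhausted is the same pigeonhole observation as before. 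A second, minor point is verifying that the descriptor conditions can always be met simultaneously with the prescribed intersection $\{\langle 0,1,1,0\rangle\}$ below the threshold; this follows because $\desc_0(n)$ is infinite and one may freely choose the remaining elements of the descriptor above $q(p(n_1))$.
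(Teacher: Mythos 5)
Your positive half (PSD[T]-learnability via the descriptor teacher) is fine and matches the paper. The negative half, however, has a genuine gap: the single-repeated-element trick from Lemma \ref{prt-t-not-o} does not transfer to the PSD setting. You write that on the prefix $\langle 0,1,1,0\rangle^{p(n_1)}$ the learner ``must have already committed to a final hypothesis,'' but the PSD bound constrains the number of \emph{distinct} elements seen before convergence, not the length of the input or the run-time. On that prefix the dataset is $\{\langle 0,1,1,0\rangle\}$, of size $1$, so the budget $p(n_1)$ is nowhere near exhausted and the learner is under no obligation to have converged; it may legitimately wait until more of the descriptor appears (indeed, a learner that simply waits for the full descriptor and outputs the described number would plausibly PSD-learn your family with no oracle at all, if the descriptors are chosen with few elements). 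Repetition is free under a dataset measure, so your diagonalization never forces the commitment you need.

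The paper repairs exactly this point by changing what the two targets have in common: it redefines $q(\ell)$ relative to the increasing enumeration of $\{\langle 2i,1,1,0\rangle : i \leq \ell\}$ and requires
$F_n \cap [0, q(p(\langle m,p^*,1\rangle))] = \{\langle 2i,1,1,0\rangle : i \leq p(\langle m,p^*,1\rangle)\}$,
i.e.\ the two sets $F_{\langle m,p^*,0\rangle}$ and $F_{\langle m,p^*,1\rangle}$ share $p(\langle m,p^*,1\rangle)+1$ \emph{distinct} common elements below the query threshold. On an increasing enumeration the learner then genuinely exhausts its dataset budget (and its identically-answered oracle queries) before encountering any element that distinguishes the two targets, which is the pigeonhole step your version is missing. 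Your closing remark about choosing the remaining descriptor elements above the threshold, and your attention to the negative-query bookkeeping in $q$, are both correct and carry over to the repaired construction.
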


\begin{proof}
We prove the claim using a strategy similar to that used in the proof of Lemma \ref{prt-t-not-o}.  Following the notation established in the proof, the only differences are that we define $q(\ell)$ to be the maximum number about which $M$ queries the oracle when it receives inputs which are substrings $\langle 0,1,1,0 \rangle \langle 2,1,1,0 \rangle \cdots \langle 2\ell,1,1,0 \rangle$, given the oracle for $\{ \langle 2i,1,1,0 \rangle : i \leq \ell \}$, and that we define $F_n$ to be a member of $\desc_0(n)$ such that
\[
F_n \cap [0, q(p(\langle m, p^*, 1 \rangle))] = \{ \langle 2i,1,1,0 \rangle : i \leq p(\langle m, p^*, 1 \rangle) \}.
\]

Let $\mathcal F = \seq{F}{n}$.  The proof that $\mathcal F$ is PSD[T]-learnable is exactly the same as the proof that $\mathcal{MSD}$ is PRT[T]-learnable.  That $\mathcal F$ is not PSD[O]-learnable follows from the observation that for abitrary $M = \phi_m$, $M$ cannot distinguish between $F_{\langle m, p^*, 0 \rangle}$ and $F_{\langle m, p^*, 1 \rangle}$ on increasing enumerations without receiving more than $p(\langle m, p^*, 1 \rangle)$ elements of an enumeration of the target.

\end{proof}

\begin{Theorem}
\ 
\begin{enumerate}
\item $\text{PSD} \subset \text{PSD[O]} \subset \text{PSD[T,O]}$,
\item $\text{PSD} \subset \text{PSD[T]} \subset \text{PSD}[T,O]$,
\item $\text{PSD[O]} \setminus \text{PSD[T]} \neq \emptyset$,
\item $\text{PSD[T]} \setminus \text{PSD[O]} \neq \emptyset$.
\end{enumerate}
\end{Theorem}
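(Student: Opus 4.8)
The plan is to derive the four claims exactly as in Section~\ref{prt-sec}. The inclusions and the strict inclusions $\text{PSD}\subset\text{PSD}[O]$ and $\text{PSD}\subset\text{PSD}[T]$ are immediate from the preceding Proposition together with Example~\ref{psd-t-o-not-ex} (the family $\{[0,2^n]\}_{n\in\bbn}$ lies in $\text{PSD}[O]$ and in $\text{PSD}[T]$ but not in $\text{PSD}$). Item~(4) is Lemma~\ref{psd-t-not-o}; since the family produced there lies in $\text{PSD}[T]\subseteq\text{PSD}[T,O]$ but outside $\text{PSD}[O]$, it also gives $\text{PSD}[O]\subset\text{PSD}[T,O]$, finishing item~(1). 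So the real content is item~(3): producing a family in $\text{PSD}[O]\setminus\text{PSD}[T]$ — and such a family, lying in $\text{PSD}[T,O]$, simultaneously finishes item~(2).

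For item~(3) I would adapt the column self-describing sets $\mathcal{CSD}$ from Lemma~\ref{prt-o-not-t}. That $\mathcal{CSD}$ (or a cosmetic variant of it) is $\text{PSD}[O]$-learnable is the easy half: the learner uses the exponential query search algorithm to locate the highest non-empty column, queries that column in increasing order to find its maximal element, and decodes the target index from these data. The number of oracle queries is polynomial in the target index $a_n$, and the learner need not consume a single element of the text in order to converge, so the dataset bound holds trivially. This is exactly the argument for $\text{PSD}[O]$-learnability in Lemma~\ref{prt-o-not-t}, reading ``distinct text elements'' in place of ``computation steps.''

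The hard half is that the family is not $\text{PSD}[T]$-learnable, and here the run-time argument of Lemma~\ref{not-prt-t} does not transfer directly: bounding the number of hypothesis changes no longer bounds the dataset, since a learner may legitimately revise its conjecture on a stream that merely repeats elements it has already received. The approach I would take: given a polynomial bound $p$, pick $n$ with $p_n$ dominating $p$ pointwise, put $a=a_n$, and consider the chain $B_{n,0}\subset B_{n,1}\subset\cdots\subset B_{n,p_n(a)-1}\subset A_n$. Using teacher-locking sequences I would build a nested sequence $\sigma_0\subseteq\sigma_1\subseteq\cdots\subseteq\sigma_{p_n(a)-1}$ with $\sigma_i$ a locking sequence for the learner-teacher pair on $B_{n,i}$, and then extend $\sigma_{p_n(a)-1}$ by the special column of $A_n$ to get a text of $A_n$ on which the learner successively conjectures the indices of $B_{n,0},\dots,B_{n,p_n(a)-1}$ and only afterwards the index of $A_n$. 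Since these conjectures are pairwise distinct and the learner is a function of the teacher's output alone, passing from $\sigma_i$ to $\sigma_{i+1}$ cannot leave the teacher's output content unchanged; the crux — and the expected main obstacle — is to upgrade this to: the teacher's output gains a genuinely new element at each step, so that more than $p(a)=p(\mi{\mathcal{CSD}}{A_n})$ distinct elements have been passed before convergence on $A_n$, contradicting the dataset bound. What has to be ruled out is the alternative in which the teacher signals the growing chain position through the multiplicities or positions of a bounded set of already-revealed elements; I expect this to follow from the teacher being online, prefix-monotone, and emitting only elements of the target — so that on the $A_n$-text it can never be certain the special column will not still appear and must therefore make each tentative commitment for a $B_{n,i}$ irrevocable and separable by the learner, which cannot be done within a fixed distinct-element budget once the chain is long enough. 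If a cosmetic variant of $\mathcal{CSD}$ is needed to make this watertight, I would redesign the signals attached to the members of the chain so that they are, by construction, incompatible as prefixes only when they differ in content, forcing the teacher to expose fresh data at each link.
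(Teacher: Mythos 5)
Your decomposition matches the paper's: Example~\ref{psd-t-o-not-ex} and Lemma~\ref{psd-t-not-o} give the left-hand strict inclusions and item~(4), and item~(3) is obtained by showing that $\mathcal{CSD}$ from Lemma~\ref{prt-o-not-t} lies in $\text{PSD[O]}\setminus\text{PSD[T]}$. Your observation that the two right-hand strict inclusions then follow for free from items~(3) and~(4) is a genuine simplification: the paper instead merges the two witness families as in Lemma~\ref{prt-to-not-t-o} to produce a single family in $\text{PSD[T,O]}\setminus(\text{PSD[T]}\cup\text{PSD[O]})$, which is more than the stated theorem requires.

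The gap is exactly where you flag it, and you have not closed it. The paper disposes of item~(3) in one line by asserting that the proof of Lemma~\ref{not-prt-t} already shows that a family containing a chain $F_{b_0}\subset\cdots\subset F_{b_{p(a)-1}}\subset F_a$ is not $\text{PSD[T]}$-learnable. What that proof actually yields is that $T(\sigma_{i+1})$ properly extends $T(\sigma_i)$ at each link (since $M$ is a function of the teacher's output and must produce distinct hypotheses), i.e.\ it bounds mind changes and the \emph{length} of the teacher's output from below --- not the number of \emph{distinct} elements the teacher has emitted, which is what Definition~\ref{psd-def} counts. Your proposed repair is to show that the teacher's output gains a genuinely new element at each link, but you only assert that you ``expect'' this to follow from the teacher being online, prefix-monotone and content-restricted. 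It does not: nothing in those constraints prevents the teacher from signalling its current position in the chain by re-emitting a single already-revealed element with growing multiplicity (say $\sigma(0)^{k}$ with $k$ encoding a monotonically updated guess at the target's index), so that $M$ converges on every text having seen only one distinct element from $T$. Ruling this out --- or redesigning the family so that it is ruled out, as you offer to do in your final sentence --- is the entire content of item~(3), and it is missing from the proposal. (If one is willing to invoke the later Theorem~\ref{pmc-t}, the equality $\text{PSD[T]}=\text{PMC}$ reduces item~(3) to $\mathcal{CSD}\notin\text{PMC}$, which genuinely is about mind changes and does follow from the Lemma~\ref{not-prt-t} argument; but that equality itself hinges on the same multiplicity issue, and in any case you do not appeal to it.)
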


\begin{proof}
By Lemma \ref{psd-t-not-o} and Example \ref{psd-t-o-not-ex}, we need only prove that $\text{PSD[O]} \setminus \text{PSD[T]} \allowbreak \neq \emptyset$ and $\text{PSD[T]} \cup \text{PSD[O]} \subset \text{PSD[T,O]}$.

Observe that the proof of Lemma \ref{not-prt-t} demonstrates that an indexed family meeting the hypotheses of the lemma is not PSD[T]-learnable.  Thus, Lemma \ref{prt-o-not-t} proves that $\mathcal{CSD} \in \text{PSD[O]} \setminus \text{PSD[T]}$.

Following the proof of Lemma \ref{prt-to-not-t-o}, merging the families constructed in Lemmas \ref{psd-t-not-o} and \ref{prt-o-not-t} with suitable modifications produces an indexed family which is PSD[T,O]-learnable, but neither PSD[T]-learnable nor PSD[O]-learnable.

\end{proof}

It follows from the definitions that $\text{PRT} \subseteq \text{PSD}$, $\text{PRT[O]} \subseteq \text{PSD[O]}$, $\text{PRT[T]} \subseteq \text{PSD[T]}$ and $\text{PRT[T,O]} \subseteq \text{PSD[T,O]}$.  With the following theorem, we show that all three of these containments are strict.

\begin{Theorem}
$\text{PSD} \setminus \text{PRT}[T,O] \neq \emptyset$
\end{Theorem}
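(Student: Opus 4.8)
The goal is to produce an indexed family that is learnable from a polynomial-size dataset alone (no oracle, no teacher), yet cannot be learned in polynomial run-time even with both a teacher and an oracle. The key tension to exploit: PSD-learning only bounds the number of *distinct* elements the learner must see before converging, whereas PRT-learning bounds *computation steps*, and in the presence of a teacher and oracle the run-time obstacle must come from somewhere other than data volume or query count. The natural idea is to build a family where a correct hypothesis is determined by a small dataset (so PSD-learning is trivial), but where actually *producing* the correct index requires inverting some computation — i.e., the small dataset names a hard computational problem. I would use marked self-description again, but invert its usual role: instead of spreading the self-description thinly (which made PRT learning hard because of data volume), I would make the description compact but *computationally expensive to decode*.

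Concretely, I plan the following. First, fix the family: let $F_n$ be a descriptor on column $0$ describing $n$ (so $F_n \in \desc_0(n)$), but additionally require that $F_n$ contains a single element on column $1$, namely $\langle e, s, 1, 1\rangle$ where $n = \langle e, \cdot\rangle$ and $s$ is the least stage such that $\phi_{e,s}(e)\downarrow$ — in other words, the set encodes, in addition to $n$ itself, a witness to a halting computation, chosen so that for a machine trying to *guess* $n$ before seeing the descriptor, the relevant halting time is not polynomially bounded. The cleaner route: make $\mi{\mathcal F}{F_n}$ itself small (linear in the descriptor size) while forcing $F_n$ to "hide" $n$ behind a search whose length $q(\cdot)$ is not polynomially bounded in the index, mimicking the $q$-construction of Lemma \ref{prt-t-not-o} but now *against a learner equipped with both a teacher and an oracle*. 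Since a teacher can only pass along a subset of the data it has seen, and an oracle only answers membership queries, neither helps a PRT-bounded learner that has not yet received the bulk of the descriptor: I would define $F_n$ so that the portion of $F_n$ lying in $[0, r(\mi{\mathcal F}{F_n})]$ is a fixed, target-independent set for all indices of a given learner (as in Lemma \ref{prt-t-not-o}'s formula (\ref{msd-formula})), where $r$ bounds the combined run-time and oracle use of the hypothetical learner-teacher pair — here I must be careful that the teacher's computation is *not* counted, so the diagonalization is against the learner's queries and steps only, and I must arrange that the teacher, no matter how clever, cannot manufacture from the initial common segment any information distinguishing the candidate targets.

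Second, I would show PSD-learnability directly: a plain learner waits until the full descriptor on column $0$ has appeared (detectable because descriptors are self-certifying via the completion-index parity conditions), reads off $n = \sum_{x} \signed(x)$, and outputs the fixed u.c.e. index for $F_n$; the number of distinct elements seen is exactly $|F_n|$, which I will have arranged to be polynomially bounded in $\mi{\mathcal F}{F_n}$ (e.g. by choosing $\mi{\mathcal F}{F_n}$ to encode $|F_n|$, as in Example \ref{psd-ex}, or simply by noting descriptors describing $n$ can be taken of size $O(\log n)$ or $O(n)$ while being given a comparably large index). Third, the non-PRT[T,O] part: given any learner-teacher pair $(M,T)$ with $M = \phi_m$ and polynomial bound $p$, pick an index $n$ of the form $\langle m, p^*, \ldots\rangle$, let $q$ bound the largest oracle query and the run-time of $M$ on the common initial segment, and choose $F_{n_0}, F_{n_1}$ agreeing with a fixed prefix up to $q(p(\max(n_0,n_1)))$ but describing different numbers; then $M$ must converge before $T$ has forwarded — indeed before $M$ has even queried about — any distinguishing element, so $M$'s final hypothesis cannot be correct for both. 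The main obstacle I anticipate is the bookkeeping needed to ensure the family is genuinely u.c.e. despite $F_n$'s definition quantifying over the run-time/query behaviour of $M = \phi_m$ on inputs that themselves depend on $n$: this is the same fixed-point-flavoured subtlety handled in Lemma \ref{prt-t-not-o}, and I would resolve it identically — define $q$ from the partial computable $\phi_m$ with a fixed effective enumeration procedure and appeal to the recursion theorem / u.c.e. closure exactly as there — but one must additionally check that feeding the teacher its extra power (access to query responses) does not break the argument, which it does not, since on the common prefix the oracle's responses are themselves fixed and target-independent.
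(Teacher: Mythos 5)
There is a genuine gap in the non\hyph PRT[T,O] half of your plan. Your core step is to reuse the $q$-construction of Lemma \ref{prt-t-not-o}: make $F_{n_0}$ and $F_{n_1}$ agree on a long common initial segment and argue that the learner's budget $p(\max(n_0,n_1))$ is exhausted before any distinguishing element arrives. That works against an oracle learner, but it cannot work against a teacher, and the paper's own Lemma \ref{prt-t-not-o} is the counterexample: there the marked self-describing family --- built precisely by hiding the descriptor behind a long target-independent prefix as in formula (\ref{msd-formula}) --- is shown to \emph{be} PRT[T]-learnable. The reason is that the teacher's consumption of the raw enumeration is free: in the model, the learner is run on the stream $T\circ f$, so its step bound limits only its processing of the teacher's output, not how much of $f$ the pair reads. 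A patient teacher skips the entire common prefix, waits until the descriptor is complete, decodes it, and transmits the answer as a short string (e.g.\ $\min(D)$ repeated $n$ times); the learner then converges well within budget \emph{after} receiving distinguishing information. So the claim ``$M$ must converge before $T$ has forwarded any distinguishing element'' is false, and your observation that the teacher cannot extract distinguishing information \emph{from the common prefix} is true but irrelevant, since the teacher need not act on the common prefix at all.

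What is needed is a computability-theoretic obstruction rather than a data-volume one, and this is what the paper does: it sets $F_{2i+1}=\{2i\}$ if $i\notin K$, $F_{2i+1}=\{2i,2i+1\}$ if $i\in K$, and $F_{2^{2^i}}=\{2i,2i+1\}$, with all other even-indexed sets empty. PSD-learning is immediate (two distinct data points determine a safe hypothesis, namely $2^{2^i}$). But for a PRT[T,O]-pair with bound $p$: when $i\in K$ the minimal index of $\{2i,2i+1\}$ is $2i+1$, so the learner must converge within $p(2i+1)$ steps --- too few even to write down $2^{2^i}$ --- while when $i\notin K$ the only correct index for $\{2i,2i+1\}$ is $2^{2^i}$, which the learner must eventually output. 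Hence, for almost all $i$, ``$i\notin K$'' is equivalent to the $\Sigma^0_1$ event ``the pair ever outputs $2^{2^i}$,'' contradicting the non-c.e.-ness of $\overline K$; neither the computable teacher nor the membership oracle for the target can decide $i\in K$, so they do not help. Your first, abandoned idea (embedding a halting-time witness in $F_n$) gestures in this direction, but as stated it too would be defeated by a teacher that simply waits for the witness and forwards it; the leverage must come from the \emph{minimal index}, and hence the budget, collapsing on one side of the halting problem, not from hiding data deep in the enumeration.
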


\begin{proof}
Let $K$ denote the halting problem.  Define $\mathcal F = \{F_0, F_1, \ldots \}$ where 
\begin{itemize}
\item $F_{2i+1} = \{2i\}$ if $i \not\in K$ and $F_{2i+1} = \{2i, 2i+1\}$ if $i \in K$.
\item $F_{2^{2^i}} = \{2i, 2i+1\}$.
\item For all even numbers $2i \neq 2^{2^k}$ for some $k$, $F_{2i} =\emptyset$.
\end{itemize}
That $\mathcal F$ is $\text{PSD}$-learnable is witnessed by the learner $M$ which outputs 6 on the empty string, outputs $2i+1$ on a string with one unique element which is $2i$ and outputs $2^{2^i}$ and any other string, if the string either contains $2i$ or $2i+1$.  On the other hand, suppose that the learner-teacher pair, $(N, T)$, $\text{PRT[T]}$-learns $\mathcal F$ with polynomial bound $p$.  Computing and returning $2^{2^i}$ cannot be done within $p(2i+1)$ computation steps for more than finitely many values of $i$; thus, for all but finitely many $i$, $i \not\in K$ if and only if $(\exists j)(N(T(2i\ (2i+1)^j))=2^{2^i})$.  Since this would imply that $\overline{K}$ is $\Sigma_1^0$, we have arrived at a contradiction and must conclude that $\mathcal F \not\in \text{PRT}[T]$.  Observe that the use of an oracle does not facilitate learning in this case and so we conclude that $\text{PSD} \setminus \text{PRT[T,O]} \neq \emptyset$.

\end{proof}

\section{Polynomial Mind Changes}

\begin{Definition}
An indexed family, $\mathcal F = \seq{F}{n}$, is \emph{polynomial mind-changes learnable} (\emph{PMC-learnable}) if there is a machine $M$ and a polynomial $p$ such that for every enumeration $f$ of $F \in \mathcal F$, the hypothesis stream, $g$, generated by $M$ on $f$ satisfies $|\{ i : g(i) \neq g(i+1) \}| \leq p(\mi{\mathcal F}{F})$ and the only one that appears infinitely many times in $g$ is an index of $F$.  If an oracle is accessed, oracle use must also be bounded by $p(\mi{\mathcal F}{F})$.
\end{Definition}

We begin with an example exhibiting three PMC-learnable indexed families.

\begin{Example}
Let $\mathcal F$ be an indexed family containing all the finite sets such that $\mi{\mathcal F}{F} = \langle |F|, e \rangle$, where $e$ is the canonical code for $F$.  $\mathcal F$ is PMC-learnable as witnessed by the learning machine $M$ such that $M(a_0 \cdots a_k) = \langle |\content{a_0, \ldots, a_k}|, e \rangle$, where $e$ is the canonical code for the finite set of distinct elements in $a_0, \ldots, a_k$.  On any enumeration of a finite set, $F$, $M$ will change its hypothesis at most $|F|$ times.

Let $\mathcal F = \seq{F}{n}$, where $F_n = [0, 2^n]$.  $\mathcal F$ is PMC-learnable.  Define $M$ such that $M(\sigma)$ is a code for $[0, 2^s]$, where $s$ is the least integer greater than or equal to $\log_2(\max(\sigma))$. 

$\mathcal{MSD}$ is PMC-learned by a learning machine that waits until a descriptor has appeared in the enumeration and then outputs the number the descriptor describes.
\end{Example}

\begin{Theorem}\label{pmc-t}
$\text{PMC[T]} = \text{PMC} = \text{PSD[T]}$ and $\text{PMC[T,O]} = \text{PMC[O]}$.
\end{Theorem}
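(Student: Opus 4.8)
The plan is to establish the equalities through a short chain of simulations, with essentially all the work concentrated in the inclusion $\text{PSD[T]} \subseteq \text{PMC}$.

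First, $\text{PMC} = \text{PMC[T]}$ and $\text{PMC[O]} = \text{PMC[T,O]}$. The inclusions from left to right use the identity teacher. For the reverse inclusions, given a pair $(M,T)$ witnessing $\text{PMC[T]}$- (resp. $\text{PMC[T,O]}$-) learnability, let $N = M \circ T$; in the oracle case $N$ carries the membership oracle and feeds its answers both to the simulated $M$ and to the simulated $T$. On an enumeration $f$ of $F$, the hypothesis stream of $N$ is obtained from that of $M$ on $T\circ f$ by reading it only at the non-decreasing lengths $|T(f \upto i)|$; such a ``stuttered subsequence'' of a stream has no more mind-changes than the stream itself (each mind-change of the subsequence forces a distinct mind-change of the stream on the corresponding length-interval), and since the polynomial mind-change bound forces the stream to be eventually constant at a correct index, $N$'s stream is eventually constant at that same index (with the usual convention when the teacher's output stabilises). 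Hence $N$ witnesses $\text{PMC}$- (resp. $\text{PMC[O]}$-) learnability with the same polynomial, which disposes of the oracle-relativised identity $\text{PMC[T,O]} = \text{PMC[O]}$ as well.

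Next, $\text{PMC} \subseteq \text{PSD[T]}$. Let $M$ $\text{PMC}$-learn $\mathcal F$ with mind-change bound $p$, so on any enumeration $f$ of $F$ the hypotheses of $M$ form a sequence $h_0,\dots,h_k$ with $k \le p(\mi{\mathcal F}{F})$ whose last term is a correct index. Build a teacher $S$ that reads $f$, simulates $M$, and transmits this sequence to $N$ with a self-delimiting code over the two-letter alphabet $\{x,y\}$, where $x = f(0)$ and $y$ is the second element to appear in $f$: after $M$'s $j$-th mind-change $S$ appends a block encoding $h_j$ (unary length prefix in $y$'s, a separating $x$, then the bits in $\{x,y\}$), and pads with copies of $x$ while $M$'s hypothesis is unchanged. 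The learner $N$ decodes the blocks one at a time and outputs the value of the most recently completed block (outputting the canonical index of $\{x\}$ until the first block completes, and permanently if no second element ever appears, which covers the degenerate case $|F|=1$). Since the last block encodes a correct index and is never superseded, $N$ converges to it, and throughout $N$ has seen at most two distinct elements; thus $\mathcal F \in \text{PSD[T]}$ with bound, say, $n \mapsto n+3$.

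Finally, $\text{PSD[T]} \subseteq \text{PMC}$ — the crux. Let $(N,S)$ $\text{PSD[T]}$-learn $\mathcal F$ with bound $p$. The natural candidate for a $\text{PMC}$-learner $N'$ simulates $S$ on $f$ and $N$ on $S \circ f$, holding its hypothesis fixed and re-synchronising with $N$'s current hypothesis only when the number of distinct elements in $S$'s output strictly increases. The decisive observation is that if $N$ converges on $(S\circ f)\upto n$ with fewer than $p(\mi{\mathcal F}{F})$ distinct elements in $(S\circ f)\upto n$, then $n$ precedes the first occurrence of the $p(\mi{\mathcal F}{F})$-th distinct element, so once $S$'s output carries $p(\mi{\mathcal F}{F})$ distinct elements $N$ is already at its stable correct hypothesis; hence $N'$ changes its mind at most $p(\mi{\mathcal F}{F})+1$ times and converges correctly whenever $S$ eventually reveals that many distinct elements. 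The obstacle I expect to be hard is the complementary situation, where $S$ reveals fewer than $p(\mi{\mathcal F}{F})$ distinct elements in all, but $N$ only reaches its limit after further repetitions of those elements, so that $N'$ as described can freeze prematurely. I would dispose of this by a normalisation step: using $\text{PMC[T]} = \text{PMC}$ it suffices to build a $\text{PMC[T]}$-learner, so replace $S$ by a teacher $S'$ that mirrors $S$ and, for infinite targets, additionally emits a slow stream of fresh elements drawn from $f$, forcing the distinct-element count past $p(\mi{\mathcal F}{F})$ and reducing to the previous case; finite targets of size $\ge p(\mi{\mathcal F}{F})$ are handled the same way, while finite targets of size $< p(\mi{\mathcal F}{F})$ are handled directly, since simply outputting the canonical index of the content seen so far already yields a correct limit with only polynomially many mind-changes. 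Verifying that $N'$ still converges under this normalisation, and that the auxiliary elements inflate nothing beyond a polynomial, is the one genuinely delicate point; the four inclusions together with the trivial $\text{PMC} \subseteq \text{PMC[T]}$ then give all three equalities.
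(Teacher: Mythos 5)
Your first two components track the paper closely. The collapses $\text{PMC[T]}=\text{PMC}$ and $\text{PMC[T,O]}=\text{PMC[O]}$ via the composition $M\circ T$ are exactly the paper's argument, and your block-encoding teacher for $\text{PMC}\subseteq\text{PSD[T]}$ is the same idea as the paper's, which transmits each new hypothesis $M(\sigma)$ as the string $\min(\sigma)$ repeated $M(\sigma)$ times and lets the learner output the length of what it has received; your two-letter self-delimiting code is, if anything, more careful about keeping the teacher's output monotone.

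The gap is exactly where you flag it, in $\text{PSD[T]}\subseteq\text{PMC}$, and your patch does not close it. The paper's simulating learner $M^*$ re-synchronises with $M\circ T$ whenever the teacher's output string changes at all (whenever $T(a_0\cdots a_{k+1})\neq T(a_0\cdots a_k)$), not only when the number of distinct emitted elements increases; since $M\circ T$ is constant between such events, $M^*$ has the same limit as $M\circ T$, so the premature-freeze problem you describe does not arise there and all the weight falls on counting mind-changes. Your rule (re-sync only on a genuinely new element) makes the counting immediate but, as you correctly observe, can freeze at whatever hypothesis $N$ held when the last new element arrived, which need not be $N$'s limit. The normalisation you propose does not repair this: (i) the guarantee you invoke --- that $N$ has stabilised by the time $p(\mi{\mathcal F}{F})$ distinct elements have appeared --- concerns $N$ run on $S\circ f$, so if you replace $S$ by an $S'$ that injects extra elements you are running $N$ on an enumeration of the form $S'\circ f$, about which the PSD[T] hypothesis says nothing, whereas if you keep running $N$ on $S\circ f$ and merely count the distinct elements of $S'\circ f$, crossing the threshold tells you nothing about whether $N$ has converged; (ii) the trichotomy you split on (infinite target, finite of size at least $p(\mi{\mathcal F}{F})$, finite of smaller size) is not effective, since neither the teacher nor the learner knows $\mi{\mathcal F}{F}$, $p$, or whether the target is infinite; (iii) the fallback of ``output the canonical index of the content seen so far'' does not produce indices in $\mathcal F$'s own numbering, which is what the definitions require. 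So the one direction you call genuinely delicate remains unproved in your write-up --- and, for what it is worth, it is also the point at which the paper's own argument is tersest, asserting without elaboration that bounding the distinct elements emitted before convergence bounds the number of mind-changes of $M\circ T$.
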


\begin{proof}
Fix an arbitrary indexed family $\mathcal F$.  If $(M, T)$ PMC[T]-learns $\mathcal F$, then $M \circ T$ PMC-learns $\mathcal F$.  Since every PMC-learnable indexed family is also PMC[T]-learnable, $\text{PMC} = \text{PMC[T]}$.  Similarly, $\text{PMC[T,O]} = \text{PMC[O]}$.

Suppose $(M, T)$ PSD[T]-learns $\mathcal F$ and define $M^*$ such that $M^*(a_0 \cdots a_{k+1}) = \allowbreak M \circ T(a_0 \cdots \allowbreak a_{k+1})$ when $T(a_0 \cdots a_{k+1}) \neq T(a_0 \cdots a_k)$ and $M^*(a_0 \cdots a_{k+1}) = \allowbreak M^*(a_0 \cdots a_{k})$, otherwise.  Since $(M, T)$ PSD[T]-learns $\mathcal F$, the number of distinct elements that $T$ outputs before $M\circ T$ converges to a correct hypothesis is polynomially bounded, hence $M^*$ changes hypothesis a polynomially bounded number of times.  Thus, $\mathcal F \in \text{PMC}$

Define functions $f$ and $g$ such that $f(\sigma) = |\sigma|$ and $g(n, x) = x^n$, the string $x$ repeated $n$ times.  Suppose $M$ PMC-learns $\mathcal F$.  Define $T$ such that $T(\sigma) = g(M(\sigma), \min(\sigma))$ if $M(\sigma)$ is different from $M(\tau)$ for all $\tau \prec \sigma$.  $T(\sigma)$ is undefined otherwise.  $(f, T)$ PSD[T]-learns $\mathcal F$ because it converges to a correct hypothesis after reading a polynomially bounded number of outputs from $T$.

\end{proof}

\begin{Theorem}
$\text{PMC} = \text{PMC[T]} \subset \text{PMC[O]} = \text{PMC[T,O]}$
\end{Theorem}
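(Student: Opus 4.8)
The plan is to observe that the two claimed equalities are already in hand and that only the strict inclusion $\text{PMC} \subset \text{PMC[O]}$ needs a genuine argument, for which the family $\mathcal{CSD}$ of Lemma \ref{prt-o-not-t} will serve as the witness.

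First, $\text{PMC} = \text{PMC[T]}$ and $\text{PMC[O]} = \text{PMC[T,O]}$ are exactly the content of Theorem \ref{pmc-t}, so nothing remains to be done there. The inclusion $\text{PMC} \subseteq \text{PMC[O]}$ is immediate from the definitions: a PMC-learner is a PMC[O]-learner that happens to make no oracle queries, so its oracle use is identically $0$ and is therefore bounded by any polynomial in $\mi{\mathcal F}{F}$. Hence it suffices to produce a family in $\text{PMC[O]} \setminus \text{PMC}$.

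I would take that family to be $\mathcal{CSD}$. On the one hand, I would first record the elementary containment $\text{PRT}[O] \subseteq \text{PMC}[O]$: if a learner equipped with an oracle reaches and then retains a correct index within $p(\mi{\mathcal F}{F})$ computation steps, then — since emitting each successive hypothesis consumes at least one computation step — its hypothesis stream changes at most $p(\mi{\mathcal F}{F})$ times before stabilizing at a correct index, which is precisely the PMC[O] requirement, while the oracle-use bound carries over verbatim. Since Lemma \ref{prt-o-not-t} shows $\mathcal{CSD}$ is PRT[O]-learnable, it follows that $\mathcal{CSD} \in \text{PMC[O]}$. On the other hand, Theorem \ref{pmc-t} gives $\text{PMC} = \text{PSD[T]}$, and the proof that $\mathcal{CSD} \in \text{PSD[O]} \setminus \text{PSD[T]}$ already established $\mathcal{CSD} \notin \text{PSD[T]}$ (using, for each polynomial $p$, the chain $B_{n,0} \subset \cdots \subset B_{n,p(a_n)-1} \subset A_n$ together with Lemma \ref{not-prt-t}). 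Therefore $\mathcal{CSD} \notin \text{PMC}$, and the inclusion $\text{PMC} \subset \text{PMC[O]}$ is strict, completing the chain.

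The argument is essentially bookkeeping over results already proved; the one point I would be careful to state explicitly is the containment $\text{PRT}[O] \subseteq \text{PMC}[O]$, which relies on the convention (already implicit in the paper's efficiency measures) that producing a hypothesis costs at least one computation step, so that the run-time bound dominates the mind-change count. No genuine obstacle is anticipated beyond making that observation precise.
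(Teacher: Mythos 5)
Your proposal is correct and follows essentially the same route as the paper: both use Theorem \ref{pmc-t} for the two equalities and $\mathcal{CSD}$ as the witness for strictness, with the non-PMC-learnability ultimately resting on Lemma \ref{not-prt-t}. The only cosmetic difference is that you derive $\mathcal{CSD} \notin \text{PMC}$ via the equality $\text{PMC} = \text{PSD[T]}$ and the already-established $\mathcal{CSD} \notin \text{PSD[T]}$, whereas the paper applies the argument of Lemma \ref{not-prt-t} to PMC directly; your explicit justification of $\text{PRT[O]} \subseteq \text{PMC[O]}$ is a useful detail the paper leaves implicit.
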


\begin{proof}
The proof of Lemma \ref{not-prt-t} implies that indexed families which meet the hypotheses are not PMC-learnable.  Thus, $\mathcal{CSD}$ is PMC[O]-learnable, but not PMC-learnable.  By Theorem \ref{pmc-t}, $\text{PMC} = \text{PMC[T]}$ and $\text{PMC[T]} = \text{PMC[T,O]}$.  Hence, the desired claims are true.

\end{proof}

\section{Polynomial Size Characteristic Sample}

\begin{Definition}
An indexed family, $\mathcal F$, is \emph{polynomial size characteristic sample learnable} (\emph{PCS-learnable}) if there is a machine $M$, a polynomial $p$ and a family $\mathcal H$ such that for each $F \in \mathcal F$, there is a corresponding $H \in \mathcal H$ such that $|H| < p(\mi{\mathcal F}{F})$ and if $f$ is an enumeration of $F$, then $M$ outputs the same encoding of $F$ on every initial segment of $f$ whose content includes $H$.  If an oracle is accessed, oracle use must also be bounded by $p(\mi{\mathcal F}{F})$.
\end{Definition}

\begin{Theorem}\label{pcs-o}
\ 
\begin{enumerate}
\item $\text{PCS[O]} \setminus \text{PCS[T]} \neq \emptyset$,\label{pcso-minus-pcst}
\item $\text{PCS[T]} \setminus \text{PCS[O]} \neq \emptyset$ and\label{pcst-minus-pcso}
\item $\text{PCS[T,O]} \setminus (\text{PCS[T]} \cup \text{PCS[O]}) \neq \emptyset$.\label{pcsto-minus-pcst-and-pcso}
\end{enumerate}
\end{Theorem}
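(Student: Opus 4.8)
The plan is to produce three indexed families, one for each clause, by adapting the constructions of Section~\ref{prt-sec} (and the PSD refinement of Lemma~\ref{psd-t-not-o}) to the characteristic-sample setting. The guiding observation is a \emph{conflict principle} for PCS[T]: if $(M,T)$ PCS[T]-learns $\mathcal F$ with a characteristic-sample assignment $F\mapsto H_F$, and $F'\subsetneq F$ are both in $\mathcal F$ with $H_F\subseteq F'$, then one obtains a contradiction by taking a string $\sigma$ with $\content{\sigma}=F'$: passing through $T$, the string $T(\sigma)$ is a prefix of $T\circ g$ for an enumeration $g$ of $F$ and also of $T\circ h$ for an enumeration $h$ of $F'$, and once $T$ has forwarded $H_F\cup H_{F'}$ (which lies in $F'$) the machine $M(T(\sigma))$ is forced to be an encoding of $F$ and of $F'$ at once. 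Hence $H_F$ must meet $F\setminus F'$ for every such $F'$; in particular, if $\mathcal F$ contains, below some $F$, a large collection of pairwise disjoint blocks each of whose removal leaves a member of $\mathcal F$, then $H_F$ must be at least as large as the number of blocks.

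\textbf{Clause~\ref{pcst-minus-pcso} (PCS[T]\,$\setminus$\,PCS[O]).} I would take the marked-self-describing family of Lemma~\ref{psd-t-not-o} essentially unchanged. It is PCS[T]-learnable by the same teacher-learner pair that witnesses its PSD[T]-learnability, with the characteristic sample of $F_n$ taken to be the finite descriptor $F_n$ itself: once the whole descriptor has appeared, the teacher forwards it, the learner reads off the number described, and the output is unchanged on every longer initial segment; the size of $F_n$ is polynomial in $\mi{}{F_n}$. It is not PCS[O]-learnable for the reason given in the proof of Lemma~\ref{psd-t-not-o}: for $\phi_m=M$ with oracle-use recorded by $q$, the sets $F_{\langle m,p^*,0\rangle}$ and $F_{\langle m,p^*,1\rangle}$ agree on $[0,q(\cdot)]$ except on more than $p(\langle m,p^*,1\rangle)$ elements, so any characteristic sample for $F_{\langle m,p^*,1\rangle}$ would have to include all of those elements and exceed the allowed bound.

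\textbf{Clauses~\ref{pcso-minus-pcst} and \ref{pcsto-minus-pcst-and-pcso}.} For PCS[O]\,$\setminus$\,PCS[T] I would start from $\mathcal{CSD}$ (Lemma~\ref{prt-o-not-t}) but reshape the auxiliary sets: underneath each ``top'' set $A_n$, instead of a nested chain, I would place the proper subsets obtained by deleting, one at a time, each of a large family of pairwise disjoint blocks that partition $A_n$, with the blocks organized in a balanced tree so that a logarithmic number of membership queries isolates the (at most one) missing block. By the conflict principle the characteristic sample of $A_n$ must meet every block, so its size is superpolynomial in $\mi{}{A_n}$ and the family is not PCS[T]-learnable; but the oracle learner of Lemma~\ref{prt-o-not-t}, extended to binary-search for the missing block, converges after reading a polynomially bounded characteristic sample, so the family is PCS[O]-learnable. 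For clause~\ref{pcsto-minus-pcst-and-pcso} I would then merge this family with the family of clause~\ref{pcst-minus-pcso} on the even and odd indices, rescaling reference indices exactly as in the proof of Lemma~\ref{prt-to-not-t-o}: a learner makes one oracle query to decide which half contains the target, then simulates the appropriate learner (rescaling hypotheses), so the merged family is PCS[T,O]-learnable while containing the two earlier families as indexed subfamilies, hence failing PCS[T] and PCS[O].

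\textbf{Main obstacle.} The delicate step is the non-PCS[T] half of clause~\ref{pcso-minus-pcst}: simultaneously (i)~packing enough pairwise-disjoint removable blocks under $A_n$ that the conflict principle forces a superpolynomial characteristic sample, (ii)~keeping the deleted block locatable with polynomially many membership queries so that PCS[O]-learnability survives, and (iii)~keeping the family uniformly c.e.\ with a clean reference index for every set (in particular giving each ``$A_n$ with block deleted'' an index whose size is controlled relative to $\mi{}{A_n}$). Balancing (i) against (ii) is the crux, and the tree-structured block family is the natural device for reconciling ``many blocks'' with ``cheap search''; verifying that the conflict principle really rules out \emph{every} small sample (not merely the obviously bad ones) is where most of the care will be needed, since, unlike for PRT, PSD, and PMC, a bare chain does not by itself obstruct PCS-learnability.
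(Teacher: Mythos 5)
Your ``conflict principle'' is sound and is precisely the mechanism behind the paper's separation, but both of your concrete constructions have genuine gaps. For clause~\ref{pcst-minus-pcso}, the family of Lemma~\ref{psd-t-not-o} does not work ``essentially unchanged.'' First, the whole descriptor $F_n$ is not a polynomial-size characteristic sample: $F_n$ contains the $p(\langle m,p^*,1\rangle)+1$ marked elements, where $p$ is the polynomial coded \emph{inside} $n$, so no single polynomial bounds $|F_n|$ in terms of $n$ uniformly over the family. Second, and more fundamentally, the lower bound in Lemma~\ref{psd-t-not-o} concerns the number of distinct data elements a learner must \emph{receive} before converging; it does not transfer to characteristic-sample size, because the PCS definition constrains the learner only on initial segments that already contain the sample. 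A characteristic sample for $F_{\langle m,p^*,1\rangle}$ need not contain the shared marked prefix at all: it can consist of the distinguishing high elements of the descriptor, after which an oracle learner can recover the marked prefix by an exponential query search over the indices $i$ of $\langle 2i,1,1,0\rangle$ using polylogarithmically many queries. So that family is plausibly PCS[O]-learnable and your claimed non-membership fails. The paper instead builds a new family for this clause, with $F_{2k}=[2^{2k+1}+1,2^{2k+2}]$ exponentially large relative to its index and $F_{2k+1}$ a diagonally constructed trap $D_k\cup\{2^{2k+1}+1\}$ defeating the $k$-th candidate oracle learner.

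For clause~\ref{pcso-minus-pcst}, the disjoint-block construction defeats the oracle learner as well as the teacher. A membership oracle answers questions about single numbers, so each query reveals the presence or absence of only the one block containing the queried element; there is no ``balanced tree'' arrangement under which logarithmically many membership queries locate the missing block among $N$ pairwise disjoint blocks. Worse, the adversary argument that kills PCS[T] kills PCS[O] too: on an initial segment containing the polynomial-size sample for $A_n$, the learner makes at most polynomially many queries and so touches only polynomially many of the superpolynomially many blocks; hence some block $B$ is disjoint from both the data and the queries, and the learner behaves identically on targets $A_n$ and $A_n\setminus B$. The paper sidesteps this tension by taking $\mathcal G=\{\bbn\}\cup\{[0,n]:n\ge 1\}$: the removable parts $[n+1,\infty)$ are nested infinite tails, so no finite sample for $\bbn$ can avoid all of them (your conflict principle applied to $F=\bbn$, $F'=[0,n]$), while a single oracle query at $\max(\content{\sigma})+1$ resolves the ambiguity. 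Clause~\ref{pcsto-minus-pcst-and-pcso} is then the same merge you describe, but as stated it inherits both gaps above.
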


\begin{proof}
Define $G_0 = \bbn$, $G_{n} = [0, n]$ for $n > 0$, and $\mathcal G = \seq{G}{n}$.  $\mathcal F$ is PCS[O]-learned by $M$, where $M(a_0 \cdots a_n) = 0$ if the answer to a query about $\max \{a_0, \ldots , a_n \} + 1$ is \textsc{true} and is a code for $[0, \max \{a_0, \ldots , a_n \}]$ otherwise.  Since any string, $\sigma$, can either be extended to an enumeration of $G_0 = \bbn$ or to an enumeration of $G_n = [0, n]$ for any $n \geq \max(\content{\sigma})$, no learner-teacher pair can PCS-learn $\mathcal G$.  Thus, we have proved \ref{pcso-minus-pcst}.

Fix $k$ and suppose that $k = \langle n, p^* \rangle$, where $p$ is an increasing polynomial, and let $M$ be the learner coded by $n$.  Define $E_k$ to be a c.e.~subset of $[2^{2k+1} + 1, 2^{2k+2}]$ that satisfies three conditions.
\begin{itemize}
\item $2^{2k+1}+1 \in E_k$.
\item $|E_k| = p(2k+1)+1$.
\item For any enumeration, $f$, of $[2^{2k+1} + 1, 2^{2k+2}]$, if $E_k \subset f\upto i$ for some $i \leq p(2k+1)$, then $M(f\upto j) = 2k$ for $i \leq j \leq p(2k+1)$.
\end{itemize}
If no such set exists, let $E_k = \emptyset$.  If $E_k \neq \emptyset$, we define a set $D_k$ satisfiying the following conditions.
\begin{itemize}
\item $|D_k| = 2p(2k+1)+1$.
\item $E_k \subset D_k \subset [2^{2k+1} + 1, 2^{2k+2}]$.
\item $D_k$ includes the first $p(2k+1)$ members of $[2^{2k+1} + 1, 2^{2k+2}]$ about which $M$ queries the oracle on a fixed uniformly computable enumeration of $E_k$.
\end{itemize}
If $E_k = \emptyset$, then $D_k = \emptyset$.  To prove \ref{pcst-minus-pcso}, define $\mathcal F = \{F_0, F_1, \ldots \}$ where $F_{2k} = [2^{2k+1} + 1, 2^{2k+2}]$ and $F_{2k+1} = D_k \cup \{2^{2k+1} + 1\}$.  Observe that for any oracle learner, $M$, and polynomial, $p$, there is a $k$ such that either 
\begin{itemize}
\item there is an enumeration of $F_{2k+1}$ on which $M$ converges to $2k$ or $M$ makes more than $p(2k+1)$ oracle queries, or 
\item $M$ does not have a characteristic sample for $F_{2k}$ of size at most $p(2k)$. 
\end{itemize}
Thus, $\mathcal F$ is not $\text{PCS[O]}$-learnable.  On the other hand, consider the learner, $M$, and teacher, $T$, defined as follows.  Once a number of the form $2^{2k+1}+1$ appears in the enumeration, $T$ outputs $2^{2k+1}+1$.  Using $k$, $T$ then determines a natural number, $n$, and polynomial, $p$, such that $k = \langle n, p \rangle$.  The teacher outputs no further numbers until the distinct elements of the enumeration exceeds $2p(2k+1) + 1$.  At this point, $T$ outputs $2^{2k+1} + 2$.  Simultaneously, $T$ calculates $D_k$.  If $D_k$ is nonempty, then $T$ outputs the least element of $[2^{2k+1} + 1, 2^{2k+2}] \setminus D_k$ if it appears in the enumeration.  $M$ returns $2k+1$ if $T$ has output only one element and returns $2k$ if $T$ has output two or more elements.  The learner-teacher pair $\text{PCS[T]}$-learns $\mathcal F$, proving \ref{pcst-minus-pcso}.

We prove \ref{pcsto-minus-pcst-and-pcso} by combining the two families defined above into one family: define $\mathcal H = \{G_0 \otimes \{0\}, F_0 \otimes \{1\}, G_1 \otimes \{0\}, F_1 \otimes \{1\}, \ldots\}$.  Were $\mathcal H$ $\text{PCS[O]}$-learnable, that would imply that $\mathcal F$ is $\text{PCS[O]}$-learnable; similarly, if $\mathcal H$ were $\text{PCS[T]}$-learnable then $\mathcal G$ would also be $\text{PCS[T]}$-learnable.  That $\mathcal H$ is $\text{PCS[T,O]}$-learnable is witnessed by a learner-teacher pair (with access to an oracle) that first waits to see whether the enumeration contains elements of the form $\langle n, 0 \rangle$ or $\langle n, 1 \rangle$ and applies the appropriate learning algorithm as defined above.

\end{proof}

%

The final theorem of this paper illustrates some of the relationships between $\text{PCS}$-learning and the other three types of polynomial-bounded learning.

\begin{Theorem}
\ 
\begin{enumerate}
\item $\text{PMC} \setminus \text{PCS} \neq \emptyset$.\label{pmc-not-pcs}
\item $\text{PSD} \subset \text{PCS}$.\label{psd-sub-pcs}
\item $\text{PCS}\setminus \text{PMC} \neq \emptyset$.\label{pcs-minus-pmc}
\end{enumerate}
\end{Theorem}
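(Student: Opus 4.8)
The plan is to treat the three parts separately: \ref{psd-sub-pcs} by an inclusion together with a separating example, and \ref{pmc-not-pcs}, \ref{pcs-minus-pmc} by exhibiting explicit indexed families, reusing what is already available in the paper.

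For \ref{psd-sub-pcs}, the inclusion $\text{PSD}\subseteq\text{PCS}$ should follow by unwinding the definitions. If $M$ PSD-learns $\mathcal F$ with polynomial $p$, I would take as characteristic sample for $F\in\mathcal F$ any set $H_F$ of $p(\mi{\mathcal F}{F})$ distinct elements of $F$ (all of $F$, if $|F|<p(\mi{\mathcal F}{F})$); enlarging the polynomial slightly keeps $|H_F|$ below the bound. The key point is that any initial segment of any enumeration of $F$ whose content includes $H_F$ already exhibits at least $p(\mi{\mathcal F}{F})$ distinct elements, so $M$ has passed its point of convergence and the hypothesis is a correct index; after replacing $M$ by the learner that sorts its input into increasing order and then commits to a canonical code for the set it has identified, that hypothesis is a fixed code. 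For strictness I would use $\mathcal F=\seq{F}{n}$ with $F_n=\{2n\}\cup\{2k+1:k\in\bbn\}$: the singleton $\{2n\}$ is a characteristic sample — the learner that emits a fixed code for $F_n$ as soon as an even number appears in the text witnesses $\mathcal F\in\text{PCS}$ — but $\mathcal F\notin\text{PSD}$, since an enumeration of $F_n$ may list many odd numbers before $2n$, and a string listing $\max(p(\mi{\mathcal F}{F_n}),p(\mi{\mathcal F}{F_{n'}}))$-many distinct odd numbers is a common initial segment of enumerations of $F_n$ and of $F_{n'}$, so no PSD-learner can have converged to a correct index on it for both.

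For \ref{pmc-not-pcs}, I would reuse the indexed family $\mathcal F$ built in the proof of Theorem~\ref{pcs-o}(\ref{pcst-minus-pcso}), with $F_{2k}=[2^{2k+1}+1,2^{2k+2}]$ and $F_{2k+1}=D_k\cup\{2^{2k+1}+1\}$; that proof already establishes $\mathcal F\notin\text{PCS[O]}$, hence $\mathcal F\notin\text{PCS}$ since $\text{PCS}\subseteq\text{PCS[O]}$. It remains to check $\mathcal F\in\text{PMC}$. A learner reads the first datum $a_0$ and computes the unique block index $k$ with $a_0\in[2^{2k+1}+1,2^{2k+2}]$ (the intervals for distinct $k$ are disjoint, and every element of both block-$k$ sets lies in this interval); it then hypothesizes $F_{2k+1}$, revising to $F_{2k}$ precisely when the content seen equals the full interval $[2^{2k+1}+1,2^{2k+2}]$, which is a decidable condition. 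On an enumeration of $F_{2k}$ the content eventually exhausts the interval, triggering a single revision to a correct hypothesis; on an enumeration of $F_{2k+1}$ the content never fills the interval, so the hypothesis is never revised. Counting the opening move, this is at most two mind-changes, so $\mathcal F\in\text{PMC}$.

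For \ref{pcs-minus-pmc}, I would take $\mathcal F$ with $F_0=\bbn$ and $F_n=[1,n+1]$ for $n\geq 1$, which is plainly an indexed family. It lies in PCS: use $\{0\}$ as characteristic sample for $\bbn$ and $\{n+1\}$ for $[1,n+1]$, and let the learner emit a fixed code for $\bbn$ once $0$ has appeared and a code for $[1,\max(\content\sigma)]$ otherwise; an enumeration of $\bbn$ eventually exhibits $0$ while an enumeration of $[1,n+1]$ keeps $0$ out of its content, so this learner identifies $\mathcal F$ and its output is constant once the relevant sample has been seen. But $\mathcal F\notin\text{PMC}$: for every polynomial $p$ the chain $[1,2]\subset[1,3]\subset\cdots\subset[1,p(0)+1]\subset\bbn$ consists of $p(0)$ members strictly below $F_0=\bbn$, so by the argument of Lemma~\ref{not-prt-t} the family is not PMC-learnable with bound $p$, and hence not PMC-learnable at all. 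The step I expect to be the real obstacle is the inclusion in \ref{psd-sub-pcs}: turning the PSD-learner into a PCS-learner that outputs not merely a correct index but literally the \emph{same} code on all large enough initial segments of all enumerations of a target — the effective normalization of the guess to a canonical code, and the verification that it is genuinely constant across enumerations, is the delicate part, whereas the verifications for \ref{pmc-not-pcs} and \ref{pcs-minus-pmc} are routine once the right learner, respectively the right family, is in hand.
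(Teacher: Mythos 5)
Your part (1) and the strictness half of part (2) essentially reproduce the paper's argument: the paper's separating family for $\text{PSD}\subset\text{PCS}$ is $A_n=\{n\}\oplus\bbn$, which is literally your $\{2n\}\cup\{2k+1:k\in\bbn\}$, and your PMC-learner for the family of Theorem~\ref{pcs-o} (revise to $2k$ exactly when the content fills the interval) is a harmless simplification of the paper's. For part (3) you take a genuinely different and simpler route: the paper builds finite chains $\{k\}\oplus[0,1]\subset\cdots\subset\{k\}\oplus[0,2^k]$ whose top element is engineered to carry the small index $2k$ while the intermediate sets carry exponentially large indices; you instead place the infinite set $\bbn$ at index $0$, so the permitted number of mind changes at the top of the chain is the constant $p(0)$ and any finite chain $[1,2]\subset\cdots\subset\bbn$ of length exceeding $p(0)$ does the damage. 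Your family works and is easier to verify (take the chain of length $p(0)+1$ so that the forced number of mind changes strictly exceeds $p(0)$ --- the same off-by-one already present in Lemma~\ref{not-prt-t}); the paper's extra machinery buys nothing essential here.

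The genuine gap is in the inclusion $\text{PSD}\subseteq\text{PCS}$, exactly where you flag trouble. Taking $H_F$ to be an \emph{arbitrary} set of $p(\mi{\mathcal F}{F})$ distinct elements of $F$ does guarantee that every sufficiently rich initial segment carries \emph{some} correct index, but not a single fixed one: the sorted-input learner $M^*(\sigma)=M(\mathrm{sort}(\content{\sigma}))$ may output different correct indices on two contents $D\neq D'$ that both include $H_F$, because $\mathrm{sort}(D)$ and $\mathrm{sort}(D')$ need not share a prefix long enough to pin down the limit of $M$. Your fallback --- normalizing to "a canonical code for the set it has identified" --- is not effective, since extracting a canonical or minimal index from a hypothesis requires deciding equality of members of the indexed family, a $\Pi^0_2$ problem in general. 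The paper's fix is the choice of $H_F$: take $H_F$ to be the $p(\mi{\mathcal F}{F})$ \emph{smallest} elements of $F$. Then every element of $F\setminus H_F$ exceeds $\max H_F$, so for any $D$ with $H_F\subseteq D\subseteq F$ the string $\mathrm{sort}(D)$ begins with $\mathrm{sort}(H_F)$, which is an initial segment of the increasing enumeration of $F$ of length $p(\mi{\mathcal F}{F})$; since $M$ must already have converged on any repetition-free enumeration of $F$ after fewer than $p(\mi{\mathcal F}{F})$ data, $M(\mathrm{sort}(D))=M(\mathrm{sort}(H_F))$ is one fixed correct code, independent of $D$. With that choice your normalization goes through and no canonicalization of hypotheses is needed.
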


\begin{proof}
Observe that the family, $\mathcal F$, defined in the proof of Theorem \ref{pcs-o} is also $\text{PMC}$-learnable.  Consider a learner, $M$, which attempts to compute $D_k$ and returns 0 until a number of the form $2^{2k+1} + 1$ appears in the enumeration.  If this is the only number in the enumeration, then $M$ outputs $2k+1$.  $M$ also outputs $2k + 1$ if $M$ succeeds in computing $D_k$ and every element of the enumeration is in $D_k \cup \{2^{2k+1} + 1\}$.  Otherwise, $M$ outputs $2k$.  Since $\mathcal F \not\in \text{PCS}$, we have proved \ref{pmc-not-pcs}.

We prove \ref{psd-sub-pcs} in two parts.  First, suppose that $M$ $\text{PSD}$-learns a family $\mathcal G = \{G_0, G_1, \ldots \}$ with polynomial bound $p$.  Let $C_i$ denote the first at most $p(i)$ elements of $G_i$.  Define $M^*$ such that $M^*(\sigma) = M(\tau)$, where $\tau$ lists the distinct elements of $\sigma$ in increasing order.  Since $M$ must $\text{PSD}$-learn $G_i$ on the increasing enumeration, $C_i$ must be a characteristic sample for $M^*$ on $G_i$.  Thus, $\text{PSD} \subseteq \text{PCS}$.  Now, consider $\mathcal A = \{A_0, A_1, \ldots \}$ where $A_n = \{n\} \oplus \bbn$.  Consider the string $\alpha_k$ consisting of the odd numbers from $1$ to $2k+1$.  For any member of $\mathcal A$, there is an enumeration that begins with $\alpha_k$.  Consequently, $\mathcal A$ is not $\text{PSD}$-learnable.  Conversely, each member of $\mathcal A$ has a characteristic sample of size $1$.  We conclude that $\text{PSD} \subset \text{PCS}$.

Given $n \in \bbn$, there are unique $i_n$ and $k_n$ such that $n = i_n + 2^{k_n}$ and $1 \leq i_n \leq 2^{k_n}$.  Define $\mathcal G = \{ G_0, G_1, \ldots \}$ where $G_{2n} = \{n\}\oplus[0, 2^n]$ and $G_{2n+1} = \{k_n\}\oplus[0, i_n]$.  In order to $\text{PCS}$-learn $\mathcal F$, we define a learner $M$ as follows.  Let $\sigma$ be an arbitrary string of natural numbers.  If $\sigma$ contains no odd numbers or contains no even numbers, define $M(\sigma) = 0$.  Otherwise, let $2n$ be the least even number in $\sigma$ and let $2m+1$ be the greatest odd number.  $M(\sigma) = 2n$ if $m = 2^n$ and $M(\sigma) = 2k + 1$, where $k = m + 2^n$, if $m \neq 2^n$.  Each member of $\mathcal G$ has a characteristic sample of size 2 for $M$, thus, $M$ $\text{PCS}$-learns $\mathcal G$.  Conversely, suppose that $N$ $\text{PMC}$-learns $\mathcal G$.  For each $n, i, a_1, a_2, \ldots, a_{i-1}$ and $k = i + 2^n$, there is an $a_i$ such that $N(2n\  1\  3^{a_1}\  5^{a_2} \ldots (2i-1)^{a_{i-1}}\ (2i+1)^{a_i}) = 2k+1$.  Thus, for any polynomial there is an $n$ such that $p(2n) < 2^n$ and an enumeration of $G_{2n}$ on which $N$ outputs $2^n$ different hypotheses.  We have proved \ref{pcs-minus-pmc}.

\end{proof}

\section{Acknowledgements}

The authors could like to thank the anonymous referee for numerous useful comments and suggestions.

\end{document}